\def\biglf{\par\bigskip\noindent}
\def\fs#1{\footnotesize#1}
\newcommand{\R}{\ensuremath{\mathbb{R}}}
\newcommand{\D}{\ensuremath{\mathcal{D}}}
\newcommand{\B}{\ensuremath{\mathcal{B}}}
\newcommand{\N}{\ensuremath{\mathbb{N}}}
\newcommand{\Z}{\ensuremath{\mathbb{Z}}}
\def\bfm#1{\protect{\makebox{\boldmath $#1$}}}
\def\a {\bfm{a}}
\def\p {\bfm{p}}
\def\fa {\mbox{ for all }}
\def\wh{\widehat}
\def\wt{\widetilde}
\newtheorem{Theorem}{Theorem}[section]
\newtheorem{Corollary}[Theorem]{Corollary}
\newtheorem{Lemma}[Theorem]{Lemma}
\theoremstyle{definition}
\newtheorem{Definition}[Theorem]{Definition}
\theoremstyle{remark}
\newtheorem{Remark}[Theorem]{Remark}
\numberwithin{equation}{section}
\begin{document}
\begin{frontmatter}
\title{Analysis of moving least squares approximation revisited}%
\author{D. Mirzaei}
\ead{d.mirzaei@sci.ui.ac.ir}%
\address{Department of Mathematics, University of Isfahan, 81745-163 Isfahan, Iran. }

\begin{keyword}
Moving least squares approximation \sep Error bounds \sep Sobolev spaces \sep Meshless methods.
\end{keyword}
\date{\small\textsl{\today}}
\begin{abstract}
In this article the error estimation of the moving least squares approximation is provided for functions in fractional order Sobolev spaces. The analysis presented in this paper extends the previous estimations and explains some unnoticed mathematical details. An application to Galerkin method for partial differential equations is also supplied.
\end{abstract}
\end{frontmatter}
\section{Introduction}\label{SectIntro}
The {\em Moving Least Squares (MLS)} approximation was introduced in an early paper by Lancaster and Salkauskas \cite{lancaster-salkauskas:1981-1} in
1981 with special cases going back to McLain \cite{mclain:1974-1,mclain:1976-1} in 1974 and 1976 and to Shepard \cite{shepard:1968-1} in 1968. For other early studies we can
mention the work of Farwig \cite{farwig:1986,farwig:1987,farwig:1991}.
Since, in MLS one writes the value of the unknown function in terms of {\em scattered} data, it can be used as an approximation to span the trial space in
meshless (or meshfree) methods. This approximation has found many applications in curve fitting and numerical solutions of partial differential equations since early nineties \cite{belytschko-et-al:1994-1,belytschko-et-al:1996-1,atluri-zhu:1998-1,mirzaei-schaback:2013-1}.

The error analysis of MLS approximation was provided by some authors, beginning with the work of Farwig \cite{farwig:1991} which is limited
to a univariate case.
The connection to Backus-Gilbert optimality was studied by Levin \cite{levin:1998-1} in 1998, and later it was used by Wendland \cite{wendland:2001-1,wendland:2000-1,wendland:2005-1} in a more elaborated setting.
In Liu et. al. \cite{belytschko-et-al:1997-1} the analysis is presented for smooth functions in $C^{m+1}(\Omega)\cap H^{m+1}(\Omega)$.
Armentano and Dur\'{a}n \cite{armentano-duran:2001-1} proved error estimates in $L^\infty$ for the function and its first derivatives in one dimensional case. Afterward Armentano \cite{armentano:2001-1} generalized this to multi-dimensional cases but it is still restricted to ``convex" domains and
Sobolev spaces of order one.
One can also find an estimation in Han and Meng \cite{han-meng:2001-1} for reproducing kernel particle methods (which is related to the MLS approximation) for integer order Sobolev spaces. They assumed a constant bound for the norm of the inverse matrix (matrix $A$ in text) and considered it for special cases in one dimension and first order approximations. Note that the role of this matrix is very crucial in analysis.
The paper of Zuppa \cite{zuppa:2003-1} is also limited to some specific situations.
In Wendland \cite{wendland:2001-1,wendland:2005-1} the analysis presented only for the function in classical function spaces. We can also mention the work of Melenk \cite{melenk:2004-1} where the theoretical and computational aspects of some meshless approximation methods, including MLS, are considered.

The collocation method based on the MLS approximation is called {\em finite point method}.
An analysis for this method has been presented in \cite{cheng-cheng:2008-1}. Besides, an interpolating MLS is
developed recently. For error analysis and applications to element-free Galerkin method
see \cite{ren-et-al:2014-1,wang-et-al:2014-1}.

The present work is based on the theory of Wendland and extends all the above results to a general case. All mathematical details are provided, special care is taken near the boundary, and lower bound for the minimum eigenvalue of the MLS local matrix is derived in general case, independent of the mesh-size. Besides, the analysis is presented for
functions in fractional order Sobolev spaces.
Finally an application to Galerkin methods for elliptic PDEs is investigated.

\section{MLS approximation}\label{SectGMLS}

Let $\Omega\subset \R^d$, for positive integer $d$, be a nonempty and bounded set.
In the next section, more conditions on $\Omega$ will be imposed. Assume,
$$
X = \{x_1, x_2,\ldots ,x_N\}\subset \Omega,
$$
is a set containing $N$ scattered points, called \emph{centers} or \emph{data site}. Distribution of points
should be well enough to pave the way for analysis.

Henceforth, we use $\mathbb P_m^d$, for $m\in\N_{0}=\{n\in\Z, n\geqslant 0\}$,
as the space of $d$-variable polynomials of degree at most $m$ of dimension
$Q={m+d\choose d}$. A basis for this space is denoted by $\{p_1,\ldots,p_Q\}$ or $\{p_\alpha\}_{0\leqslant|\alpha|\leqslant m}$.
As usual, $B(x,r)$ stands for the ball of radius $r$ centered at $x$.

The MLS, as a meshless approximation method, provides an approximation $s_{u,X}$ of $u$ in terms of values $u(x_j)$ at
centers $x_j$ by
\begin{equation}\label{mlsapp}
u(x)\approx s_{u,X} (x) = \sum_{j=1}^N a_j(x) u(x_j), \quad x\in  \Omega,
\end{equation}
where $a_j$ are \emph{MLS shape functions} given by
\begin{equation}\label{shapefunc-aj}
a_j(x)=w(x,x_j)\sum_{k=1}^Q\lambda_k(x) p_k(x_j),
\end{equation}
where the influence of the centers is governed by weight function $w_j(x)=w(x,x_j)$, which vanishes for arguments $x,x_j\in\Omega$ with $\|x-x_j\|_2$ greater than
a certain threshold, say $\delta$. Thus we can define $w_j(x)=\Phi((x-x_j)/\delta)$ where $\Phi:\R^d\to\R$ is a nonnegative function with support in the unit ball $B(0,1)$. Coefficients $\lambda_k(x)$ are the unique solution of
\begin{equation}\label{system-lambdak}
\sum_{k=1}^Q\lambda_k(x)\sum_{j\in J(x)}w_j(x)p_k(x_j)p_\ell(x_j)=p_\ell(x),\quad 0\leqslant \ell\leqslant Q,
\end{equation}
where $J(x)=\{j: \|x-x_j\|_2\leqslant \delta\}$ is the family of indices of points in the support of $w$.
In vector form
\begin{equation*}
\a(x) = W(x)P^T(PW(x)P^T)^{-1}\p(x),
\end{equation*}
where $W(x)$ is the diagonal matrix carrying the weights $w_j(x)$ on its diagonal, $P$ is a $Q\times \#J(x)$ matrix of values $p_k(x_j)$, $j\in J(x)$, $1\leqslant k\leqslant Q$, and $\p=(p_1,\ldots ,p_Q)^T$.
In MLS one finds the best approximation to $u$ at point $x$, out of $\mathbb P_m^d$ with respect to a discrete $\ell^2$ norm induced by
a \emph{moving} inner product, where the corresponding weight function depends not only on points $x_j$ but also on the evaluation point $x$ in question.
Note that $A(x)=PW(x)P^T$ is a symmetric positive definite matrix for all $x\in\Omega$.
More details can be found in Chapter 4 of \cite{wendland:2005-1}.

In what follows we will assume that $\Phi$ is nonnegative and continuous on $\R^d$ and positive on the ball $B(0,1/2)$. In many application we can assume that
$$
\Phi(x)=\phi(\|x\|_2), \quad x\in \R^d,
$$
meaning that $\Phi$ is a radial function. Here $\phi:[0,\infty)\to \R$ is positive on $[0,1/2]$, supported in $[0,1]$ and its even extension is nonnegative and continuous on $\R$.

If, further, $\phi$ is sufficiently smooth, derivatives of $u$ are usually approximated by derivatives of $s_{u,X}$,
\begin{equation}\label{mlsderivapp}
D^\alpha u \approx D^\alpha s_{u,X} (x) = \sum_{j=1}^N D^\alpha a_j(x) u(x_j), \quad x\in  \Omega,
\end{equation}
and they are called \emph{standard derivatives}. They are different from {\em GMLS} or {\em diffuse derivatives} \cite{mirzaei-et-al:2012-1} which are not the aim of this paper.

\section{Error estimation}\label{Sect-errorbound}
Since error estimates will be established using a variety of Sobolev spaces, we introduce them now. Let $\Omega\subset \R^d$ be a domain. For $k\in\N_0$, and $p\in[1,\infty)$, we define the Sobolev space $W_p^k(\Omega)$ to consist of all $u$ with distributional derivatives $D^\alpha u\in L^p(\Omega)$, $|\alpha|\leqslant k$. The (semi-)norms associated with these spaces are defined as
$$
|u|_{W_p^k(\Omega)}:=\left(\sum_{|\alpha|=k}\|D^\alpha u\|_{L^p(\Omega)}^{p} \right)^{1/p},\;\;
\|u\|_{W_p^k(\Omega)}:=\left(\sum_{|\alpha|\leqslant k}\|D^\alpha u\|_{L^p(\Omega)}^{p} \right)^{1/p}.
$$
The case $p=\infty$ is defined in the standard way
$$
|u|_{W_\infty^k(\Omega)}:=\sup_{|\alpha|=k}\|D^\alpha u\|_{L^\infty(\Omega)}, \;\;
\|u\|_{W_\infty^k(\Omega)}:=\sup_{|\alpha|\leqslant k}\|D^\alpha u\|_{L^\infty(\Omega)}.
$$
For fractional order Sobolev spaces, we use the norms below. Let $p\in[1,\infty)$,
$k\geqslant0$, $k\in \Z$, and let $0 < s < 1$. We define the fractional order Sobolev spaces
$W_p^{k+s}(\Omega)$ to be the space of all $u$ for which the norms below are finite.
\begin{align*}
|u|_{W_p^{k+s}(\Omega)}&:=\left(\sum_{|\alpha|=k}\int_\Omega\int_\Omega \frac{|D^\alpha u(x)-D^\alpha u(y)|^p}{|x-y|^{d+ps}}dxdy \right)^{1/p},\\
\|u\|_{W_p^{k+s}(\Omega)}&:=\left( \|u\|_{W_p^{k}(\Omega)}+|u|_{W_p^{k+s}(\Omega)}\right)^{1/p}.
\end{align*}
The first step in deriving error estimates is to consider only local regions $\D$ that are \emph{star-shaped} with respect to a ball.
A domain $\D\subset \R^d$ is said to be {star-shaped} with respect to a ball $B=B(y,\rho) = \{x\in\R^d : \|x-y\|\leqslant \rho\}$ if for every $x\in \D$,
the closed convex hull of $\{x\}\cup B$ is contained in $\D$.
Let
$$
\rho_{\max} =\sup\{\rho: \D \mbox{ is star-shaped with respect to a
ball of radius } \rho\},
$$
then the \emph{chunkiness parameter} of $\D$ is defined by
$
\gamma = \frac{d_\D}{\rho_{\max}}
$
where $d_\D$ is the diameter of $\D$.

Approximating a function $u\in W^{m+1}_q(\D)$ by averaged Taylor polynomials
$Q_m u  \in\mathbb P_m^d$ is discussed in \cite[Chapter 4]{brenner-scott:2008-1}.
The averaged Taylor polynomials are defined as follows. Let $B$ be a ball with respect to which $\D$ is star-shaped having radius
$\rho\geqslant \frac{1}{2}\rho_{\max}$. Then
$$
Q_m u(x):=\sum_{|\alpha|\leqslant m}\frac{1}{\alpha!}\int_B D^\alpha u(y)(x-y)^\alpha\varphi(y)dy,
$$
where $\varphi(y)\geqslant 0$ is a $C^\infty$ ``bump" function supported in $B$ satisfying both $\int_B\varphi(y)dy=1$ and $\max \varphi \leqslant C\rho^{-d}$.

In \cite{brenner-scott:2008-1} the $W_p^\ell$ bounds on $u-Q_mu$ are given for integer $\ell$ when $u\in W_p^{m+1}(\D)$ and $\ell\leqslant m+1$.
A version of these results
that applies when $u$ belongs to $W_p^{m+s}(\D)$, $0 \leqslant s < 1$, was proved in \cite{narcowich-et-al:2004-1}.
An improvement of conditions (range of $s$) was discussed in \cite{narcowich-et-al:2006-1} by the same authors.
\begin{Lemma}\label{lem-tylor-errLinf}
Let $B$ be a ball in $\D$ such that $\D$ is star-shaped
with respect to $B$ and such that its radius $\rho\geqslant (1/2)\rho_{\max}$. Let $Q_mu$ be
the Taylor polynomial of order $m$ of $u$ averaged over $B$ where $u \in  W^{m+s}_p(\D)$ for $0\leqslant s<1$ and $p\in[1,\infty)$.
Let $m>d/p$ for $p>1$ and $m\geqslant d$ for $p=1$.
Then
there exists constant $C=C(m,d,p,\gamma)$ such that
\begin{equation}\label{tayloravgLinf-frac}
\|u-Q_mu\|_{L^\infty(\D)}\leqslant C\, d_\D^{m+s-d/p}|u|_{W^{m+s}_p(\D)},
\end{equation}
where $d_\D$ is the diameter of $\D$.
\end{Lemma}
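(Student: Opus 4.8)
The plan is to deduce the $L^\infty$ bound from the $W_p^\ell$-type estimates for the averaged Taylor polynomial that are already available from \cite{brenner-scott:2008-1,narcowich-et-al:2004-1,narcowich-et-al:2006-1}, combined with a Sobolev embedding on a patch rescaled to diameter $1$. Write $h:=d_\D$ and $v:=u-Q_mu$. Under the present hypotheses the fractional Bramble--Hilbert estimates (for the full range $0\le s<1$ one appeals to \cite{narcowich-et-al:2006-1}) supply a constant $C=C(m,d,p,\gamma)$ with
\begin{equation*}
|v|_{W_p^\ell(\D)}\le C\,h^{\,m+s-\ell}\,|u|_{W_p^{m+s}(\D)},\qquad 0\le \ell\le m,
\end{equation*}
using the convention $|\cdot|_{W_p^0(\D)}=\|\cdot\|_{L^p(\D)}$. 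Each exponent $m+s-\ell$ is $\ge s\ge0$, so no negative powers of $h$ arise and only the robust index range $\ell\le m$ of those results is needed.

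First I would rescale. After a translation we may assume the defining ball is $B=B(0,\rho)$; set $\wh\D:=h^{-1}\D$ and $\wh v(\wh x):=v(h\wh x)$. Then $\wh\D$ has diameter $1$, it is star-shaped with respect to $B(0,\rho/h)$ whose radius satisfies $\rho/h\ge\rho_{\max}/(2h)=1/(2\gamma)$, and its chunkiness parameter is again $\gamma$; hence $\wh\D$ satisfies a cone condition whose parameters depend only on $d$ and $\gamma$. A change of variables gives the scaling identities
\begin{equation*}
\|\wh v\|_{L^\infty(\wh\D)}=\|v\|_{L^\infty(\D)},\qquad
\|D^\alpha\wh v\|_{L^p(\wh\D)}=h^{\,|\alpha|-d/p}\,\|D^\alpha v\|_{L^p(\D)},
\end{equation*}
so that $\|\wh v\|_{W_p^n(\wh\D)}^p=\sum_{\ell=0}^n h^{(\ell-d/p)p}\,|v|_{W_p^\ell(\D)}^p$ for every integer $n$. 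Fix an integer $n$ with $d/p<n\le m$ if $p>1$ (possible since $m>d/p$; for instance $n=m$) and $n=d\le m$ if $p=1$. On domains satisfying a cone condition the Sobolev embedding $W_p^n(\wh\D)\hookrightarrow L^\infty(\wh\D)$ holds with a constant depending only on $n,d,p$ and $\gamma$; for $p=1$, $n=d$ this is the endpoint embedding $W_1^d\hookrightarrow L^\infty$, which still holds and reduces, via an extension operator, to the inequality $\|w\|_{L^\infty(\R^d)}\le\|\partial_1\cdots\partial_d w\|_{L^1(\R^d)}$ for $w\in C_c^\infty(\R^d)$.

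Chaining these,
\begin{equation*}
\|v\|_{L^\infty(\D)}=\|\wh v\|_{L^\infty(\wh\D)}\le C_1\,\|\wh v\|_{W_p^n(\wh\D)}
=C_1\Bigl(\sum_{\ell=0}^n h^{(\ell-d/p)p}\,|v|_{W_p^\ell(\D)}^p\Bigr)^{1/p},
\end{equation*}
where $C_1=C_1(n,d,p,\gamma)$ is the embedding constant; inserting the Bramble--Hilbert bounds shows that the $\ell$-th summand is at most $C^p\,h^{(m+s-d/p)p}\,|u|_{W_p^{m+s}(\D)}^p$ --- the same value for every $\ell$, because $(\ell-d/p)+(m+s-\ell)=m+s-d/p$. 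Summing the $n+1$ terms and taking $p$-th roots gives $\|v\|_{L^\infty(\D)}\le C(m,d,p,\gamma)\,h^{\,m+s-d/p}\,|u|_{W_p^{m+s}(\D)}$, i.e. \eqref{tayloravgLinf-frac} since $h=d_\D$. The two points needing genuine care are: (i) verifying that the cited $W_p^\ell$-bounds are valid for all $0\le\ell\le m$ and $0\le s<1$ under the hypotheses $m>d/p$ (resp. $m\ge d$), with the \emph{seminorm} $|u|_{W_p^{m+s}(\D)}$ on the right; and (ii) that the embedding constant on $\wh\D$ really depends on the geometry only through $\gamma$ --- this is the step that exploits star-shapedness with respect to a ball of radius $\ge\rho_{\max}/2$. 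A more self-contained route avoids the embedding theorem entirely by starting from the explicit representation
\begin{equation*}
u(x)-Q_mu(x)=m\sum_{|\alpha|=m}\frac{1}{\alpha!}\int_B\int_0^1\varphi(y)\,(x-y)^\alpha(1-t)^{m-1}\bigl[D^\alpha u\bigl((1-t)y+tx\bigr)-D^\alpha u(y)\bigr]\,dt\,dy,
\end{equation*}
bounding $|(x-y)^\alpha|\le d_\D^{\,m}$ and $\varphi\le C\rho^{-d}\le C'd_\D^{-d}$, and estimating the remaining difference of $D^\alpha u$ at the points $(1-t)y+tx$ and $y$ against $|D^\alpha u|_{W_p^s(\D)}$ through the substitution $z=(1-t)y+tx$; in that route the delicate part is precisely this last fractional estimate.
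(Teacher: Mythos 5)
The paper itself offers no proof of this lemma: it is imported from \cite{narcowich-et-al:2004-1,narcowich-et-al:2006-1}, where it is obtained by a direct estimate of the integral form of the averaged Taylor remainder. So your argument is necessarily a different route, and its skeleton --- rescale $\D$ to diameter one, observe that the rescaled domain satisfies a cone condition with parameters controlled by $\gamma$ alone, apply the embedding $W_p^n\hookrightarrow L^\infty$ with $n>d/p$ (or $n=d$, $p=1$), and undo the scaling --- is sound. The scaling identities, the exponent bookkeeping $(\ell-d/p)+(m+s-\ell)=m+s-d/p$, and the observation that the hypothesis $m>d/p$ is consumed precisely by the embedding are all correct; this organization is in some ways cleaner than the direct computation, since it isolates where that hypothesis matters.

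The problem is the input you feed into it. The estimate $|u-Q_mu|_{W_p^\ell(\D)}\leqslant C\,d_\D^{\,m+s-\ell}|u|_{W_p^{m+s}(\D)}$, $0\leqslant\ell\leqslant m$, is unproblematic for $s=0$ (classical Bramble--Hilbert, \cite{brenner-scott:2008-1}), but for $0<s<1$ the references you cite obtain exactly these bounds \emph{from} the $L^\infty$ estimate \eqref{tayloravgLinf-frac}: in the present paper the chain runs from Lemma \ref{lem-tylor-errLinf} to Corollary \ref{cor-taylor-LinfDa} to the $W_q^{|\alpha|}$ bound \eqref{tayloravgLq-frac}, and the same is true in \cite{narcowich-et-al:2004-1}. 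As written, your main route is therefore circular. It can be repaired either by invoking an independent source for the fractional-order Bramble--Hilbert lemma in $W_p^\ell$ (the interpolation-space version of Dupont and Scott, which is proved without any condition relating $m$, $d$ and $p$ and with the seminorm on the right), or by executing your fallback route. That fallback is the actual argument of \cite{narcowich-et-al:2004-1}, and your remainder formula is correct; but the step you defer --- bounding $\int_B\int_0^1|D^\alpha u((1-t)y+tx)-D^\alpha u(y)|\,dt\,dy$ by a constant times $d_\D^{\,s+d-d/p}|D^\alpha u|_{W_p^s(\D)}$ via H\"older and the substitution $z=(1-t)y+tx$, whose Jacobian degenerates as $t\to1$ and must be absorbed by the factor $(1-t)^{m-1}$ --- is the entire analytic content of the lemma (and is the source of the restrictions on $s$ that \cite{narcowich-et-al:2006-1} later relaxed). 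Until one of these repairs is made the proposal is a correct plan rather than a proof.
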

We should note that the identity
\begin{equation}\label{QmDa}
D^\alpha Q_mu = Q_{m-|\alpha|}D^\alpha u, \quad \mbox{for all } u\in W^{|\alpha|}_1(\D).
\end{equation}
which is found in \cite[Section 4]{brenner-scott:2008-1}, holds for
$|\alpha|\leqslant m$.
Applying Lemma \ref{lem-tylor-errLinf} on $D^\alpha u$ instead of $u$, using the identity \eqref{QmDa} and the inequality
$|D^\alpha u|_{W_p^{k+s-|\alpha|}(\D)}\leqslant |u|_{W_p^{k+s}(\D)}$, we obtain
\begin{Corollary}\label{cor-taylor-LinfDa}
Let $0\leqslant s<1$. For $u\in W_p^{m+s}(\D)$,
\begin{equation}\label{tayloravgLinfDa-frac}
\|D^\alpha u-D^\alpha Q_mu\|_{L^\infty(\D)}\leqslant C\, d_\D^{m+s-|\alpha|-d/p}|u|_{W^{m+s}_p(\D)},
\end{equation}
provided that $m>|\alpha|+d/p$ for $p>1$ and $m\geqslant |\alpha|+d$ for $p=1$.
\end{Corollary}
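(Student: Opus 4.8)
The plan is to apply Lemma \ref{lem-tylor-errLinf} not to $u$ itself but to the derivative $D^\alpha u$, and then translate the resulting bound back into a statement about $u$ and $Q_m u$. First I would observe that if $u\in W_p^{m+s}(\D)$ with $|\alpha|\leqslant m$, then $D^\alpha u\in W_p^{m-|\alpha|+s}(\D)$, so that $D^\alpha u$ is a legitimate input to Lemma \ref{lem-tylor-errLinf} with $m$ replaced by $m-|\alpha|$. The lemma then yields
\begin{equation*}
\|D^\alpha u - Q_{m-|\alpha|}D^\alpha u\|_{L^\infty(\D)} \leqslant C\, d_\D^{\,m-|\alpha|+s-d/p}\,|D^\alpha u|_{W_p^{m-|\alpha|+s}(\D)},
\end{equation*}
valid provided the hypothesis of the lemma is met for the reduced degree, i.e. $m-|\alpha|>d/p$ for $p>1$ and $m-|\alpha|\geqslant d$ for $p=1$ — which is exactly the stated proviso $m>|\alpha|+d/p$, respectively $m\geqslant|\alpha|+d$.

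Next I would invoke the commutation identity \eqref{QmDa}, $D^\alpha Q_m u = Q_{m-|\alpha|}D^\alpha u$, which holds here since $|\alpha|\leqslant m$ and $u\in W_1^{|\alpha|}(\D)$ (a consequence of $u\in W_p^{m+s}(\D)$ on the bounded domain $\D$). Substituting this into the left-hand side of the displayed inequality replaces $Q_{m-|\alpha|}D^\alpha u$ by $D^\alpha Q_m u$, giving precisely the left-hand side of \eqref{tayloravgLinfDa-frac}. Finally I would use the seminorm monotonicity inequality $|D^\alpha u|_{W_p^{k+s-|\alpha|}(\D)}\leqslant |u|_{W_p^{k+s}(\D)}$ (here with $k=m$), which is immediate from the definition of the fractional seminorm since the multi-indices $\beta$ with $|\beta|=m-|\alpha|$ appearing in $|D^\alpha u|_{W_p^{m-|\alpha|+s}}$ correspond, via $\beta+\alpha$, to a subset of the multi-indices of order $m$ appearing in $|u|_{W_p^{m+s}}$. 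This bounds the right-hand side by $C\,d_\D^{\,m+s-|\alpha|-d/p}|u|_{W_p^{m+s}(\D)}$, completing the argument; the constant $C$ depends on $m-|\alpha|$, $d$, $p$, $\gamma$, hence on $m,d,p,\gamma$ as claimed.

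I do not expect a genuine obstacle here — the corollary is essentially bookkeeping on top of Lemma \ref{lem-tylor-errLinf}. The one point requiring a little care is verifying that each auxiliary fact is legitimately available: that $D^\alpha u$ genuinely lies in the fractional Sobolev space of the reduced order (so the integral seminorm is finite and the lemma applies), that the identity \eqref{QmDa} is applicable at the required level of regularity, and that the degree condition transforms correctly. All three are routine, so the proof is short; I would simply state the three ingredients and chain the inequalities, exactly as sketched in the paragraph preceding the corollary in the excerpt.
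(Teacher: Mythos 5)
Your proposal is correct and follows exactly the paper's own argument, which is given in the sentence preceding the corollary: apply Lemma \ref{lem-tylor-errLinf} to $D^\alpha u$ with the degree reduced to $m-|\alpha|$, rewrite $Q_{m-|\alpha|}D^\alpha u$ as $D^\alpha Q_m u$ via the commutation identity \eqref{QmDa}, and bound the reduced-order seminorm of $D^\alpha u$ by the full seminorm of $u$. The extra care you take in checking that $D^\alpha u$ lies in the reduced-order space and that the degree condition transforms correctly is sound and consistent with the paper.
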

Analogous to the error bound \eqref{tayloravgLinf-frac}, one can easily derive the $W_q^\ell$ bound for $u-Q_mu$ when $u\in W_p^{m+s}(\D)$.
We give the results in the following Lemma.
\begin{Lemma}
Let $q\in[1,\infty]$, $p\in [1,\infty)$ and $\alpha$ be a multi-index satisfying $m>|\alpha|+d/p$ for $p>1$ and $m\geqslant |\alpha|+d$ for $p=1$. With the notation and assumptions of Lemma \ref{lem-tylor-errLinf},
we have
\begin{equation}\label{tayloravgLq-frac}
\|u-Q_mu\|_{W_q^{|\alpha|}(\D)}\leqslant C\, d_\D^{m+s-|\alpha|+d(1/q-1/p)}|u|_{W^{m+s}_p(\D)},
\end{equation}
where $C=C(m,d,p,q,\alpha,\gamma)$.
\end{Lemma}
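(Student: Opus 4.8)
The plan is to reduce everything to the $L^\infty$ estimate of Corollary \ref{cor-taylor-LinfDa} and then convert an $L^\infty$ bound on the patch $\D$ into an $L^q$ bound by a trivial volume estimate. Since $\|v\|_{W_q^{|\alpha|}(\D)}$ is assembled from the quantities $\|D^\beta(u-Q_mu)\|_{L^q(\D)}$ over the finitely many multi-indices $\beta$ with $|\beta|\leqslant|\alpha|$, it suffices to bound each of these separately and then sum.

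First I would fix such a $\beta$. From $m>|\alpha|+d/p$ for $p>1$ (respectively $m\geqslant|\alpha|+d$ for $p=1$) one has $|\beta|\leqslant|\alpha|<m$, so the commutation identity \eqref{QmDa} applies and gives $D^\beta(u-Q_mu)=D^\beta u-D^\beta Q_mu=D^\beta u-Q_{m-|\beta|}D^\beta u$. The same hypothesis, with $|\alpha|$ replaced by the smaller number $|\beta|$, lets me invoke Corollary \ref{cor-taylor-LinfDa} with the multi-index $\beta$, obtaining
\begin{equation*}
\|D^\beta u-D^\beta Q_mu\|_{L^\infty(\D)}\leqslant C\,d_\D^{\,m+s-|\beta|-d/p}\,|u|_{W_p^{m+s}(\D)},
\end{equation*}
with $C=C(m,d,p,\beta,\gamma)$. (One could equally apply Lemma \ref{lem-tylor-errLinf} directly to $D^\beta u$ with averaged Taylor polynomial of order $m-|\beta|$, using the seminorm inequality $|D^\beta u|_{W_p^{m+s-|\beta|}(\D)}\leqslant|u|_{W_p^{m+s}(\D)}$.)

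Next I would pass to the $L^q$ norm via $\|v\|_{L^q(\D)}\leqslant|\D|^{1/q}\|v\|_{L^\infty(\D)}$ and the elementary estimate $|\D|\leqslant C_d\,d_\D^{\,d}$ (the set $\D$ lies inside a ball of radius $d_\D$), which turns the previous display into
\begin{equation*}
\|D^\beta(u-Q_mu)\|_{L^q(\D)}\leqslant C\,d_\D^{\,m+s-|\beta|+d(1/q-1/p)}\,|u|_{W_p^{m+s}(\D)},
\end{equation*}
with the case $q=\infty$ read as $1/q=0$ and no volume factor. Summing over $|\beta|\leqslant|\alpha|$ and absorbing the dimension-dependent number of terms into the constant would then yield the asserted bound, once each exponent $m+s-|\beta|+d(1/q-1/p)$ has been replaced by the single exponent $m+s-|\alpha|+d(1/q-1/p)$.

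The one step I expect to require care --- and the place where the clean one-power form of the statement is actually being used --- is precisely that last replacement: it rests on $d_\D^{\,|\alpha|-|\beta|}\leqslant1$, i.e. on the diameter $d_\D$ being bounded above by a fixed constant (say $d_\D\leqslant1$), which is harmless in the intended application where $\D$ shrinks, or else one lets $C$ depend additionally on an upper bound for $d_\D$. Everything else is the routine bookkeeping that Lemma \ref{lem-tylor-errLinf} and Corollary \ref{cor-taylor-LinfDa} were set up to absorb.
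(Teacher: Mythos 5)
Your proposal is correct and follows essentially the same route as the paper's own proof: both reduce to the $L^\infty$ estimate of Corollary \ref{cor-taylor-LinfDa} for each $\beta$ with $|\beta|\leqslant|\alpha|$ and convert to $L^q$ via $\mathrm{vol}(\D)\leqslant C_d\,d_\D^{\,d}$, summing over the finitely many $\beta$. The caveat you flag about replacing the exponent $m+s-|\beta|$ by $m+s-|\alpha|$ (which needs $d_\D$ bounded above, e.g.\ $d_\D\leqslant 1$, or a constant depending on an upper bound for $d_\D$) is a point the paper makes silently as well, so your version is if anything slightly more careful.
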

\begin{proof}
Although the proof can be implicitly extracted from \cite{narcowich-et-al:2004-1}, but we present it here for the reader's conveniences.
Let $q\in[1,\infty)$. Using the definition of Sobolev norms, we have
\begin{align*}
\|u-Q_mu\|_{W_q^{|\alpha|}(\D)}^q&=\sum_{|\beta|\leqslant|\alpha|}\int_\D |D^\beta(u-Q_mu)|^qdx \\
&\leqslant \#\{\beta\in\N_0^d:|\beta|\leqslant|\alpha|\}\times \mathrm{vol}(\D)\Big(\max_{|\beta|\leqslant|\alpha|}\|D^\beta(u-Q_mu)\|_{L^\infty(\D)}\Big)^q\\
&\leqslant C(d,\alpha)\, d_\D^{d}\max_{|\beta|\leqslant|\alpha|}\|D^\beta(u-Q_mu)\|_{L^\infty(\D)}^q\\
&\leqslant C(m,d,p,\alpha,\gamma)\,d_\D^{q(m+s-|\alpha|+d(1/q-1/p))}|u|_{W^{m+s}_p(\D)}^q.
\end{align*}
At the third line above, we use the facts that
$\mathrm{vol}(\D)\leqslant C_d d_\D^d$
and
$$
\#\{\beta\in\N_0^d:|\beta|\leqslant|\alpha|\}=\sum_{i=0}^{|\alpha|}{i+d-1 \choose d-1}={|\alpha|+d\choose d} = \mathcal O(|\alpha|^d).
$$
In the last line, Corollary \ref{cor-taylor-LinfDa} has been applied. Finally taking the $q$-th root of the both sides completes the proof
with the new constant $C=C(m,d,p,q,\alpha,\gamma)$.
The case $q=\infty$ can be proved by a similar argument (see also \cite[Proposition 11.29]{wendland:2005-1}).
\end{proof}
\begin{Remark}\label{remark-s1}
In case $s=1$, if we assume $m+1>|\alpha|+d/p$ for $p>1$ and $m+1\geqslant |\alpha|+d$ for $p=1$ then
estimations \eqref{tayloravgLinf-frac} and \eqref{tayloravgLq-frac}
 are still valid, due to \cite{brenner-scott:2008-1}.
The reader should be cautious that these error bounds can not be obtained by inserting $s=0$ and replacing $m$ by $m+1$ in fractional cases, because the later
produces $Q_{m+1}u$.
\end{Remark}
\biglf

Up to this point, we reviewed some Sobolev error bounds for a function which is approximated by the averaged Taylor polynomial on a star-shaped domain. These bounds are usually used for analyzing the finite element method (FEM). Now we turn to the MLS, as a meshless approximation method, and employ the above bounds to analyze it.
The final bound will be presented for functions in fractional Sobolev spaces. Although one can use the interpolation arguments (for example the ``real" method based on K-functionals)
to extend the integer order Sobolev spaces to fractional ones, here we follow the direct approach because all materials are provided via \eqref{tayloravgLinf-frac} and \eqref{tayloravgLq-frac}.

First we introduce some other notations.
For a set of points
$X=\{x_1,x_2,\ldots,x_N\}$ in a bounded domain
$\Omega\subset \R^d$, the {\em fill distance} is defined to be
\begin{equation*}
h_{X,\Omega}=\sup_{x\in\Omega}\min_{1\leqslant j\leqslant N}\|x-x_j\|_2,
\end{equation*}
and the {\em separation distance} is defined by
\begin{equation*}
q_{X}=\frac{1}{2}\min_{i\neq j}\|x_i-x_j\|_2.
\end{equation*}
A set $X$ of data sites is said to be
{\em quasi-uniform} with respect to a constant $c_{\mathrm{qu}}>0$ if
\begin{equation}\label{quasi-uniform}
q_X\leqslant h_{X,\Omega}\leqslant  c_{\mathrm{qu}} q_X.
\end{equation}
A set $X=\{x_1,\ldots,x_N\}\subset \R^d$ with $N\geqslant Q$ is called $\mathbb P_m^d$-unisolvent if the zero polynomial is the only polynomial
from $\mathbb P_m^d$ that vanishes on $X$.

A set $\Omega\subset \mathbb R^d$ is said to satisfy an {\em
interior cone condition} if there exist an angle
$\theta\in(0,\pi/2)$ and a radius $r>0$ such that for every
$x\in\Omega$ a unit vector $\xi(x)$ exists such that the cone
$$
C(x,\xi,\theta,r):=\big\{ x+ty: y\in\R^d, \|y\|_2=1, y^T\xi\geqslant \cos\theta, t\in[0,r]\big\}
$$
is contained in $\Omega$.

 Assuming the compact set $\Omega$ satisfies an interior cone condition with radius $r$ and angle $\theta$, and data site $X\subset \Omega$ satisfies the quasi-uniform condition \eqref{quasi-uniform},  Wendland \cite[Chapter 4]{wendland:2005-1} proved that shape functions $\{a_j(x)\}$ from MLS approximation
\eqref{mlsapp} provide a {\em stable local polynomial reproduction}  of degree $m$ on $\Omega$, i.e.
there exist constants
$h_0,C_{1},C_{2}>0$ independent of $X$ such that for every $x\in\Omega$
\begin{enumerate}
\item $\sum_{j=1}^Na_{j}(x) p(x_j)=p(x),
\,\fa p\in\mathbb P_m^d,$
\item $\sum_{j=1}^N|a_{j}(x)|\leqslant C_{1}$
\item $a_{j}(x)=0\,\, if\,\, \|x-x_j\|_2>\delta=2C_{2}h_{X,\Omega}$,
\end{enumerate}
for all
$X$ with $h_{X,\Omega}\leqslant h_0$.
Constant $C_1$ depends on the weight function $\phi$, and constants $C_2$ and $h_0$ are
\begin{equation}\label{eqC2h0}
C_{2}=\frac{16(1+\sin\theta)^2m^2}{3\sin^2\theta},\quad h_0=\frac{r}{C_{2}}.
\end{equation}
He also proved that, if the weight function possesses $k$ continuous derivatives then the
approximant $s_{u,X}$ is also in $C^k$.
Using the above properties, he proved the error bound
$$
\|u-s_{u,X}\|_{L^\infty(\Omega)}\leqslant Ch_{X,\Omega}^{m+1}|u|_{C^{m+1}(\Omega^*)},
$$
where $|u|_{C^{m+1}(\Omega^*)}:=\max_{|\beta|=m+1}\|D^\beta u\|_{L^\infty(\Omega^*)}$ in which
$\Omega^*=\overline{\cup_{x\in\Omega}B(x,C_{2}h_0)}$ can be obviously larger than the exact domain $\Omega$.
Here $D^\beta$ is the classical derivative operator on space $C^{m+1}$.
The results of the present paper (in a special case) extend this bound for functions $u$ in fractional order Sobolev space
$W_p^{m+s}(\Omega)$.

Taking derivatives of order $\alpha$, $|\alpha|\leqslant m$, under some mild conditions, we can show that functions $\{D^\alpha a_j(x)\}$ in approximation
\eqref{mlsderivapp} form another local polynomial reproduction in the following sense:
there exist constants
$h_0,C_{1,\alpha},C_{2}>0$ independent of $X$ such that for every $x\in\Omega$
\begin{enumerate}
\item $\sum_{j=1}^ND^\alpha a_{j}(x) p(x_j)=D^\alpha p(x),
\,\fa p\in\mathbb P_m^d,$
\item $\sum_{j=1}^N|D^\alpha a_{j}(x)|\leqslant C_{1,\alpha}h_{X,\Omega}^{-|\alpha|},$
\item $D^\alpha a_{j}(x)=0\,\, if\,\, \|x-x_j\|_2>\delta=2C_{2}h_{X,\Omega}$,
\end{enumerate}
for all
$X$ with $h_{X,\Omega}\leqslant h_0$. The first and the last items are immediately followed from the previous local polynomial reproduction system.
But proving item 2 invites more challenges. First we prove the following straightforward result.
\begin{Lemma}\label{lem-boundDw}
Let $X=\{x_1,\ldots,x_N\}\subset\Omega$ has fill distance $h_{X,\Omega}$. Suppose that function $\phi:[0,\infty)\to\R$, is supported in $[0,1]$ and its even extension belongs to $C^{m}(\R)$ for $m\in \N_0$. Then
for $w_j(x)=\phi(\|x-x_j\|_2/\delta)$, $x\in\R^d$ and $|\alpha|\leqslant m$ we have
\begin{equation}\label{boundDw}
|D^\alpha w_j(x)|\leqslant C_\alpha h_{X,\Omega}^{-|\alpha|}, \quad \forall\, x\in \Omega, \quad j=1,\ldots,N,
\end{equation}
provided that $\delta=2C_2h_{X,\Omega}$.
\end{Lemma}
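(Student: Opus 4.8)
The plan is to reduce the bound on $D^\alpha w_j$ to a statement about the derivatives of the fixed profile function $\phi$ composed with the Euclidean norm, and then rescale. First I would write $w_j(x) = \Phi((x-x_j)/\delta)$ with $\Phi(z) = \phi(\|z\|_2)$, so that by the chain rule $D^\alpha w_j(x) = \delta^{-|\alpha|} (D^\alpha \Phi)\big((x-x_j)/\delta\big)$. Hence it suffices to show that $D^\alpha \Phi$ is bounded on $\R^d$ by a constant $\widetilde C_\alpha = \widetilde C_\alpha(d,\phi)$ for every multi-index $\alpha$ with $|\alpha|\leqslant m$; the claimed estimate then follows immediately from $\delta = 2C_2 h_{X,\Omega}$, giving $|D^\alpha w_j(x)| \leqslant \widetilde C_\alpha (2C_2)^{-|\alpha|} h_{X,\Omega}^{-|\alpha|} =: C_\alpha h_{X,\Omega}^{-|\alpha|}$.

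So the real content is: if the even extension of $\phi$ lies in $C^m(\R)$, then the radial function $z\mapsto \phi(\|z\|_2)$ lies in $C^m(\R^d)$ with globally bounded derivatives up to order $m$. Away from the origin this is routine — $\|z\|_2$ is smooth there, and a Faà di Bruno / induction argument expresses $D^\alpha\Phi(z)$ as a finite sum of terms $\phi^{(k)}(\|z\|_2)$ (for $k\leqslant|\alpha|$) multiplied by rational functions of $z$ that are homogeneous of degree $k-|\alpha|$ in $\|z\|_2$; since $\phi$ is supported in $[0,1]$, only $\|z\|_2\leqslant 1$ contributes, and on the annulus where these derivatives are nonzero the negative powers of $\|z\|_2$ are harmless only if they are cancelled. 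This cancellation is exactly what the evenness hypothesis provides: the even $C^m$ extension of $\phi$ has $\phi^{(k)}(0)=0$ for odd $k$, and more usefully $\phi(t)$ is, near $t=0$, a $C^m$ function of $t^2$ up to the available order, so $\phi(\|z\|_2)$ is a $C^{\lfloor m/?\rfloor}$... — to avoid this bookkeeping I would instead argue by the standard lemma (see e.g. the treatment of radial basis functions in \cite{wendland:2005-1}) that even $C^m$ functions of one variable compose with $\|\cdot\|_2$ to give $C^m$ functions on $\R^d$, together with the fact that a $C^m$ function with compact support has all derivatives of order $\leqslant m$ bounded on $\R^d$.

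The main obstacle is therefore purely the regularity-at-the-origin point: showing that the negative powers $\|z\|_2^{k-|\alpha|}$ appearing in the differentiated radial function are controlled near $z=0$. I expect to handle it by induction on $|\alpha|$, using that $\partial_i \Phi(z) = \phi'(\|z\|_2)\, z_i/\|z\|_2$ and that $\phi'(t)/t$ extends to an even $C^{m-1}$ function of $t$ near $0$ (because $\phi$ is even and $C^m$, $\phi'$ is odd and $C^{m-1}$, so $\phi'(t)/t$ is even and $C^{m-2}$ — one loses derivatives here, which is why the cleanest route is the cited composition lemma rather than a hands-on expansion). Once boundedness of $D^\alpha\Phi$ on all of $\R^d$ for $|\alpha|\leqslant m$ is in hand, the rescaling is a one-line computation and the lemma follows.
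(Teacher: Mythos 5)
Your proposal is correct and follows essentially the same route as the paper's proof: the chain rule produces the factor $\delta^{-|\alpha|}=(2C_2h_{X,\Omega})^{-|\alpha|}$, and the remaining task is to bound the derivatives up to order $m$ of the fixed compactly supported profile $\Phi=\phi(\|\cdot\|_2)$. The paper's own argument is even terser---it simply asserts that the compactly supported $C^m$ weight has bounded derivatives and extracts the $\delta^{-|\alpha|}$ factor---so your additional care about the regularity of the radial composition at the origin (where the evenness of the extension of $\phi$ is precisely what is needed) is a refinement of the same idea rather than a different proof.
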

\begin{proof}
Since $\phi$ is a compactly supported and $C^m$ function, derivatives of $\phi$ up to order $m$ are continuous and bounded.
The absolute value of $D^\alpha w_j$, has a bound with a factor $\delta^{-|\alpha|}$ times derivatives of $\phi$. This immediately gives the desired bound for sufficiently small $h_{X,\Omega}$.
\end{proof}
The MLS approximation can be implemented in a more stable fashion, if a shifted and scaled polynomial basis function
is used as a basis for $\mathbb P_m^d$. In this case, we use the basis
\begin{equation}\label{basis-shift}
\left\{\frac{(x-z)^\alpha}{h_{X,\Omega}^{|\alpha|}} \right\}_{0\leqslant |\alpha|\leqslant m},
\end{equation}
where $z$ is fixed and depends on the evaluation point to be considered. If $\wh x$ is the evaluation point, the best result will be obtained if we finally set $z=\wh x$. In fact, MLS uses different bases for each evaluation point.
We can do this, because the formulation of MLS approximation and equations \eqref{shapefunc-aj} and \eqref{system-lambdak} are independent of the choice of basis functions.
Thus the MLS shape functions can be written as
\begin{equation}\label{ajx-shift}
a_j(x)=w_j(x)\sum_{|\alpha|\leqslant m}\lambda_\alpha(x)\frac{(x_j-z)^\alpha}{h_{X,\Omega}^{|\alpha|}},\quad j=1,2,\ldots,N,
\end{equation}
where $\lambda_\alpha(x)$ is obtained by solving the positive definite system
\begin{equation}\label{Alambdab-shift}
A(x)\lambda(x)=\p(x), \quad p_\alpha(x) = \left\{\frac{(x-z)^\alpha}{h_{X,\Omega}^{|\alpha|}} \right\}_{0\leqslant |\alpha|\leqslant m}^T,
\end{equation}
where
\begin{equation}\label{A-shift}
A_{i,k}(x)=\sum_{j=1}^N w_j(x)p_i\left(\frac{x_j-z}{h_{X,\Omega}}\right)
p_k\left(\frac{x_j-z}{h_{X,\Omega}}\right), \quad i,k=1,\ldots,Q.
\end{equation}
Since $\phi$ is supported in the unit ball, we used the summation index $\displaystyle \sum_{j=1}^N$ instead of $\displaystyle\sum_{j\in J(x)}$ in the above formulation. Since the set point $X$ satisfies the quasi uniform condition \eqref{quasi-uniform}, the number $\# J(x)$ of points in $J(x)$ can be bounded independent of $h_{X,\Omega}$ \cite{wendland:2005-1}. In fact, for $x_i, x_k\in B(x,\delta)$ and $x_i\neq x_k$ the balls $B(x_i,q_X)$ and $B(x_k,q_X)$ are disjoint. All of these balls with $x_j\in J(x)$ are contained in the ball $B(x,q_X+\delta)$. It is clear that
$$\displaystyle \mathrm{vol}\Big(\bigcup_{j\in J(x)}B(x_j,q_X)\Big)\leqslant \mathrm{vol}\big(B(x,q_X+\delta)\big),$$
which simply gives
$\# J(x)q_X^d\leqslant (\delta+q_X)^d$. Using the quasi-uniform condition and $\delta=2C_2h_{X,\Omega}$, we have
$$
\# J(x)\leqslant (1+2C_2c_{\mathrm{qu}})^d=:C_{\#}.
$$
\begin{Lemma}\label{lem-boundDA}
If the weight function $\phi$ satisfies the assumptions of Lemma \ref{lem-boundDw}, then for a fixed but arbitrary evaluation point $\wh x\in\Omega$ we have
\begin{equation}\label{boundDA}
|D^\alpha A(\wh x)|\leqslant C_\alpha h_{X,\Omega}^{-|\alpha|}, \quad \forall \, \alpha \mbox{ with }|\alpha|\leqslant m,
\end{equation}
where $C_\alpha$ is a constant matrix independent of $h_{X,\Omega}$.
\end{Lemma}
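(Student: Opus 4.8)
The plan rests on a single structural observation. In the representation \eqref{A-shift} the centre $z$ of the scaled basis is held fixed at $z=\wh x$, so the numbers $p_i\big((x_j-z)/h_{X,\Omega}\big)$ do not depend on the variable in which we differentiate; only the weights $w_j(x)$ do. Hence differentiating \eqref{A-shift} entrywise produces no Leibniz cross terms and gives, for every multi-index $\alpha$ with $|\alpha|\leqslant m$,
\begin{equation*}
D^\alpha A_{i,k}(\wh x)=\sum_{j=1}^N \big(D^\alpha w_j\big)(\wh x)\; p_i\!\left(\frac{x_j-\wh x}{h_{X,\Omega}}\right) p_k\!\left(\frac{x_j-\wh x}{h_{X,\Omega}}\right),\qquad i,k=1,\ldots,Q.
\end{equation*}
It therefore remains only to bound each factor and to count the nonzero summands.

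For the first factor, Lemma \ref{lem-boundDw} yields $|D^\alpha w_j(\wh x)|\leqslant \widetilde C_\alpha\, h_{X,\Omega}^{-|\alpha|}$ with $\widetilde C_\alpha$ independent of $h_{X,\Omega}$, of $j$ and of $\wh x$. Moreover $\mathrm{supp}\,(D^\alpha w_j)\subseteq\mathrm{supp}\, w_j\subseteq B(x_j,\delta)$, so $D^\alpha w_j(\wh x)=0$ unless $\|\wh x-x_j\|_2\leqslant\delta=2C_2 h_{X,\Omega}$; thus the sum runs effectively over $j\in J(\wh x)$ only, and by the volume-counting argument given just before the lemma $\#J(\wh x)\leqslant C_\#=(1+2C_2 c_{\mathrm{qu}})^d$, a bound independent of $h_{X,\Omega}$. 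Finally, for $j\in J(\wh x)$ we have $\|(x_j-\wh x)/h_{X,\Omega}\|_2\leqslant\delta/h_{X,\Omega}=2C_2$, so the arguments of $p_i$ and $p_k$ lie in the fixed ball $B(0,2C_2)$; since $p_i,p_k$ are fixed polynomials in $\mathbb P_m^d$ there is a constant $M=M(m,d,C_2)$ with $|p_i|,|p_k|\leqslant M$ on that ball. Combining the three facts,
\begin{equation*}
|D^\alpha A_{i,k}(\wh x)|\leqslant\sum_{j\in J(\wh x)}|D^\alpha w_j(\wh x)|\,M^2\leqslant C_\# M^2 \widetilde C_\alpha\, h_{X,\Omega}^{-|\alpha|},
\end{equation*}
which is exactly \eqref{boundDA}, with the constant matrix $C_\alpha$ being (say) the $Q\times Q$ matrix all of whose entries equal $C_\# M^2\widetilde C_\alpha$; equivalently $|D^\alpha A(\wh x)|\leqslant Q\,C_\# M^2\widetilde C_\alpha\, h_{X,\Omega}^{-|\alpha|}$ in any fixed matrix norm.

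There is no deep obstacle here; the one point deserving attention --- and precisely the reason for introducing the shifted and scaled basis \eqref{basis-shift} --- is that with $z$ frozen at $\wh x$ the polynomial factors contribute only an $h_{X,\Omega}$-independent constant, so the whole negative power $h_{X,\Omega}^{-|\alpha|}$ originates solely from differentiating the weights. Had an unscaled monomial basis been used, these factors would themselves carry powers of $h_{X,\Omega}$ and keeping track of the exponents would become the delicate bookkeeping; here it is automatic. This estimate, together with a lower bound for the smallest eigenvalue of $A(\wh x)$, is what will ultimately give item~2 of the derivative local polynomial reproduction, i.e.\ $\sum_{j=1}^N|D^\alpha a_j(\wh x)|\leqslant C_{1,\alpha}h_{X,\Omega}^{-|\alpha|}$.
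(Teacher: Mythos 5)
Your proof is correct and follows essentially the same route as the paper: differentiate \eqref{A-shift} entrywise (only the weights depend on $x$ since $z=\wh x$ is frozen), bound the polynomial factors by a constant because their arguments lie in $B(0,2C_2)$, restrict the sum to the at most $C_\#$ indices in $J(\wh x)$, and invoke Lemma \ref{lem-boundDw} for the factor $h_{X,\Omega}^{-|\alpha|}$. Your write-up simply makes explicit the support and counting details that the paper leaves implicit.
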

\begin{proof}
Equation \eqref{A-shift} gives
$$
D^\alpha A_{i,k}(x)=\sum_{j=1}^N D^\alpha w_j(x)p_i\left(\frac{x_j-\wh x}{h_{X,\Omega}}\right)p_k\left(\frac{x_j-\wh x}{h_{X,\Omega}}\right).
$$
Evaluating at $\wh x$, taking absolute value from both sides and using $\|x_j-\wh x\|_2\leqslant\delta=2C_2h_{X,\Omega}$ we obtain
\begin{align*}
|D^\alpha A_{i,k}(\wh x)|\leqslant C\sum_{j=1}^N |D^\alpha w_j(\wh x)|
\leqslant CC_{\#}C_\alpha h_{X,\Omega}^{-|\alpha|}.
\end{align*}
This completes the proof.
\end{proof}
Since $A(x)$ is positive definite for all $x\in\Omega$, all eigenvalues are real and positive. If the basis \eqref{basis-shift}
is employed, we can prove that the smallest eigenvalue of $A(x)$ has a lower bound away from zero and independent of $h_{X,\Omega}$.
Proving this assertion helps us to find a bound for $|D^\alpha A^{-1}(x)|$.
First, recall
\begin{equation}\label{lambdamin}
\lambda_{\mathrm{min}}(A(x)) = \min_{v\in\R^Q\setminus\{0\}}\frac{v^TA(x)v}{v^Tv},
\end{equation}
for symmetric matrix $A(x)$. Since $A(x)$ is also positive definite, we necessarily have $\lambda_{\mathrm{min}}(A(x))>0$.
To bound $\lambda_{\mathrm{min}}$ we follow some parts of Melenk's argument \cite{melenk:2004-1} and the concept of norming sets
presented in \ref{appendix-normingsets}.
\begin{Lemma}\label{lem-lambdamin}
Suppose that the bounded set $\Omega\subset\R^d$ satisfies an interior cone condition with radius $r$ and angle $\theta\in(0,\pi/2)$. Let
$X=\{x_1,\ldots,x_N\}\subseteq\Omega$ has fill distance $h=h_{X,\Omega}$ and satisfies
$
h\leqslant r/C_2=:h_0
$
where $C_2$ is defined in \eqref{eqC2h0}.
Suppose that $\delta=2C_2h$ is the size of supports of the weight functions, and for a fixed but arbitrary $\wh x$ in $\Omega$ the set $\{x_j\in X: j\in J(\wh x)\}$ is $\mathbb P_m^d$-unisolvent. Then there exists constant $C_\lambda$ independent of
$h$ such that
$$
\lambda_{\mathrm{min}}(A(\wh x)) \geqslant C_\lambda >0,
$$
provided that the shifted scaled basis functions \eqref{basis-shift} are employed.
\end{Lemma}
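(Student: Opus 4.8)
The plan is to bound from below the Rayleigh quotient in \eqref{lambdamin}. The idea is to rewrite $v^{T}A(\wh x)v$ as a $w_j$-weighted sum of squared values of a polynomial sampled at the scaled centers, to discard the centers whose weight need not be bounded below, and then to invoke a norming-set (sampling) inequality for $\mathbb P_m^d$ together with the equivalence of norms on that finite-dimensional space.

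First I would record the shape of the quadratic form. Given $v=(v_\alpha)_{|\alpha|\le m}\in\R^Q$, set $P(y):=\sum_{|\alpha|\le m}v_\alpha y^\alpha\in\mathbb P_m^d$ and $y_j:=(x_j-\wh x)/h$. Since by \eqref{A-shift} the entries of $A(\wh x)$ are formed from the monomials $y\mapsto y^\alpha$ evaluated at the $y_j$ and weighted by $w_j(\wh x)$, it follows that
\begin{equation*}
v^{T}A(\wh x)v=\sum_{j=1}^{N}w_j(\wh x)\,P(y_j)^{2}.
\end{equation*}
Let $c_\phi:=\min_{0\le t\le 1/2}\phi(t)$, which is strictly positive since $\phi$ is continuous and positive on $[0,1/2]$, and put $\widetilde J:=\{\,j:\ \|x_j-\wh x\|_2\le \delta/2=C_2h\,\}$. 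For $j\in\widetilde J$ one has $w_j(\wh x)=\phi(\|x_j-\wh x\|_2/\delta)\ge c_\phi$, and all terms of the sum are nonnegative, so
\begin{equation*}
v^{T}A(\wh x)v\ \ge\ c_\phi\sum_{j\in\widetilde J}P(y_j)^{2}\ \ge\ c_\phi\,\max_{j\in\widetilde J}P(y_j)^{2},
\end{equation*}
and $\|y_j\|_2\le C_2$ for every $j\in\widetilde J$.

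The crucial step is to bound $\max_{j\in\widetilde J}|P(y_j)|$ below by a multiple of $\|v\|_2$, uniformly in $h$ and in $\wh x$. Since $h\le h_0=r/C_2$, the cone $C(\wh x,\xi(\wh x),\theta,C_2h)\subset\Omega$, and after scaling by $1/h$ the fill-distance hypothesis shows that the scaled centers $\{y_j:j\in\widetilde J\}$ meet every point of the fixed cone $K:=C(0,\xi(\wh x),\theta,C_2-1)$ within distance $1$ (a point of $K$ has a scaled center within distance $1$, which then lies in $\overline{B(0,C_2)}$ and hence corresponds to an index in $\widetilde J$). This is precisely the configuration underlying the stable local polynomial reproduction recalled before the lemma: the value of $C_2$ in \eqref{eqC2h0} is chosen so that, by Markov's inequality for polynomials on a cone and the norming-set machinery of \ref{appendix-normingsets} (following Melenk \cite{melenk:2004-1} and \cite[Ch.~3--4]{wendland:2005-1}), $\{y_j:j\in\widetilde J\}$ is a norming set for $\mathbb P_m^d$ on $K$, with a norming constant $\mu=\mu(m,d,\theta)>0$; in particular $\{x_j:j\in J(\wh x)\}$ is then automatically $\mathbb P_m^d$-unisolvent. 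Hence $\|P\|_{L^\infty(K)}\le\mu\max_{j\in\widetilde J}|P(y_j)|$. Since $K$ contains a ball of a positive radius depending only on $\theta$ and $C_2$, the equivalence of norms on $\mathbb P_m^d$ yields $c_1=c_1(m,d,\theta)>0$ with $\|v\|_2\le c_1\|P\|_{L^\infty(K)}$. Chaining these estimates,
\begin{equation*}
v^{T}A(\wh x)v\ \ge\ c_\phi\,\max_{j\in\widetilde J}P(y_j)^{2}\ \ge\ \frac{c_\phi}{\mu^{2}}\,\|P\|_{L^\infty(K)}^{2}\ \ge\ \frac{c_\phi}{\mu^{2}c_1^{2}}\,\|v\|_2^{2},
\end{equation*}
so by \eqref{lambdamin} the lemma holds with $C_\lambda:=c_\phi/(\mu^{2}c_1^{2})>0$, depending only on $\phi$, $m$, $d$ and $\theta$, and not on $h$.

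The only genuinely delicate ingredient is the norming-set inequality with a norming constant independent of $h$ and of the base point $\wh x$; this is where the interior cone condition, the threshold $h\le r/C_2$, the precise value of $C_2$ in \eqref{eqC2h0} and Markov's inequality on a cone all enter, and it is essentially the same estimate already used to establish the stable local polynomial reproduction quoted before the lemma. The remaining ingredients — the reduction to $\widetilde J$ via positivity of $\phi$ on $[0,1/2]$, the passage to the scaled polynomial $P$, and the equivalence of norms on $\mathbb P_m^d$ — are routine; the only point requiring a little care is that the axis $\xi(\wh x)$ of $K$ varies with $\wh x$, but $K$ always contains a ball of one fixed radius, so all the constants stay uniform in $\wh x$.
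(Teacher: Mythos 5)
Your proposal is correct and follows essentially the same strategy as the paper's proof: write the Rayleigh quotient as a $w_j$-weighted sum of squared samples of a polynomial built from $v$ in the shifted--scaled basis, use the positivity of $\phi$ on $[0,1/2]$ to keep only centers within $\delta/2$ of $\wh x$, invoke the cone condition and the norming-set/Markov machinery to bound the sup norm of that polynomial on a full-dimensional set by its values at those centers, and finish with norm equivalence on $\mathbb P_m^d$. The one substantive difference is where the norming inequality is applied: the paper inscribes a ball $\wt B$ in the cone via Lemma \ref{lem-cone-ball}, transfers sup norms with the Bernstein inequality (Lemma \ref{lem-bernstein}), and applies Theorem \ref{thm-TTTT} on $\wt B$, whose interior-cone parameters ($\pi/3$, radius $\rho\delta$) are explicit; you apply the norming machinery directly on the cone $K$, which additionally requires the true but unstated fact that a cone of angle $\theta$ itself satisfies an interior cone condition with parameters depending only on $\theta$ (the paper's detour through $\wt B$ exists precisely to avoid having to prove this). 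On the other hand, your final step---lower-bounding $\|P\|_{L^\infty(K)}$ by $\|v\|_2$ via a fixed ball contained in $K$---is arguably cleaner than the paper's appeal to $\|\pi\|_{\infty,\partial B(\wh x,h)}$, since the sup over a sphere is not a norm on $\mathbb P_m^d$ for $m\geqslant 2$ (consider $1-\|x-\wh x\|_2^2/h^2$), whereas the sup over a full-dimensional set is. Neither difference changes the substance; both arguments yield a constant depending only on $\phi$, $m$, $d$ and $\theta$.
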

\begin{proof}
Let $v^*\in\R^Q$, $v^*\neq 0$, be a vector at which the minimum in \eqref{lambdamin} occurs for $x=\wh x$. Define
\begin{equation}\label{def-pi}
\pi(x):= \sum_{|\alpha|\leqslant m} v^*_\alpha  \frac{(x-\wh x)^\alpha}{h^{|\alpha|}}.
\end{equation}
Now using \eqref{A-shift} we simply have
$$
{v^*}^TA(\wh x)v^* = \sum_{j=1}^N w_j(\wh x) \pi^2(x_j).
$$
Since $\{x_j:j\in J(\wh x)\}$ is $\mathbb P_m^d$-unisolvent, the functionals $Z=\{\delta_{x_j}: j\in J(\wh x)\}$ form a norming set for $\mathbb P_m^d$, i.e. there exists an injective mapping $T: \mathbb P_m^d\to T(\mathbb P_m^d)\subseteq \R^{\# J(\wh x)}$, where $T(p)=\big(p(x_j)\big)_{j\in J(\wh x)}$
(See \ref{appendix-normingsets}).
Set $Z$ allows us to equip $\mathbb P_m^d$ with an equivalent norm via the operator $T$.
We define the norm on $\R^{\# J(\wh x)}$ by
$$
\|p\|_{2,w}^2:=\sum_{j\in J(\wh x)}w_j(\wh x)p^2(x_j),
$$
and the norm on $\mathbb P_m^d$ by the infinity norm. Using the properties of norming sets and setting $p=\pi$, we have
$$
\|\pi\|_{2,w}\geqslant \frac{1}{\|T^{-1}\|}\|\pi\|_{\infty,B(\wh x,\delta)}.
$$
Obviously, the set $\partial B(\wh x,h)=\{x: \|x-\wh x\|=h\}$ is a subset of $B(\wh x,\delta)$.
Definition of $\pi$ in \eqref{def-pi} ensures that the values of $\pi$ on $\partial B(\wh x,h)$ are independent of $h$, because $h$ will be canceled from the numerators and the denominators. In fact $\|\pi\|_{\infty,\partial B(\wh x,h)}$ is bounded from below by a positive factor times $\|v^*\|_1:=\sum_{|\alpha|\leqslant m}|v_\alpha^*|$, and thus we have
\begin{equation*}
\|\pi\|_{\infty,B(\wh x,\delta)} \geqslant \|\pi\|_{\infty,\partial B(\wh x,h)}\geqslant C_{\pi}\|v^*\|_1\geqslant C_{\pi}\|v^*\|_2,
\end{equation*}
where $C_{\pi}$ is the mentioned factor which is independent of $h$. The last inequality follows from the standard relations between one and two norms in $\R^Q$.

It remains to bound $\|T^{-1}\|$.
By assumptions, $\Omega $ satisfies a cone condition with angle $\theta$ and radius $r$, and $h\leqslant r/C_2$. The later gives  $\delta/2\leqslant r$. Of course the cone condition will be obeyed if we use any radius less than $r$. Thus for every $\wh x\in\Omega$,
there exists a cone $C(\wh x)=C(\wh x,\xi,\theta,\delta/2)\subset\Omega\cap B(\wh x,\delta/2)$, and using Lemma \ref{lem-cone-ball} there exists a closed ball
$$\wt B=B(\wt x,\rho\delta)\subset C(\wh x),\quad \rho=\frac{1}{2}\frac{\sin\theta}{1+\sin\theta}.$$
We are going to prove
\begin{equation}\label{eqpiBBc}
\|p\|_{\infty,B(\wh x,\delta)}\leqslant C_1 \|p\|_{\infty,\wt B} \leqslant C |p(x_k)|, \quad x_k\in \wt B\subset B(\wh x,\delta/2),
\end{equation}
for all $p\in\mathbb P_m^d$. Using Lemma \ref{lem-bernstein}, the first inequality satisfies with $C_1=\big(\frac{2}{\rho}\big)^m$.
To prove the second, let $\|p\|_{\infty,\wt B}=|p(x_M)|$, for $x_M\in \wt B$.
Since ball $\wt B$ itself satisfies an interior cone condition with radius $\rho\delta$ and angle $\pi/3$, Theorem \ref{thm-TTTT}
can be applied provided that $h\leqslant \frac{\rho\delta\sqrt3/2}{4(1+\sqrt3/2)m^2}$. One can easily check that this condition is always satisfied because $\delta=2C_2h=\frac{32h(1+\sin\theta)^2m^2}{3\sin^2\theta}$. The proof of the mentioned Theorem shows that
there exists a point $x_k\in X\cap \wt B$ such that $|p(x_k)|\geqslant \frac{1}{2}|p(x_M)|=\frac{1}{2}\|p\|_{\infty,\wt B}$. Thus in \eqref{eqpiBBc}
the constant $C$ can be chosen as $C_{\theta,m}:=2(\frac{4(1+\sin\theta)}{\sin\theta})^m$. Letting
$$w_{\mathrm{min}}:=\min_{s\in[0,1/2]}\phi(s),$$
we can write
\begin{equation*}\label{eqpiBB}
w_{\mathrm{min}}\|p\|_{\infty,B(\wh x,\delta)}^2 \leqslant C_{\theta,m}^2 w_k |p(x_k)|^2\leqslant C_{\theta,m}^2\sum_{j=1}^Nw_j|p(x_j)|^2=C_{\theta,m}^2\|p\|_{2,w}^2,
\end{equation*}
which immediately gives
$$\|T^{-1}\|\leqslant \frac{C_{\theta,m}}{\sqrt {w_{\mathrm{min}}}}.$$
Summarizing all, we have
$$
\lambda_{\mathrm{min}}(A(\wh x)) = \frac{v^*A(\wh x)v^*}{{v^*}^Tv^*}=\frac{\|\pi\|_{2,w}^2}{\|v^*\|_2^2}
\geqslant \frac{C_{\pi}^2}{\|T^{-1}\|^2}\geqslant
\frac{ w_{\mathrm{min}}C_{\pi}^2}{C_{\theta,m}^2}=:C_{\lambda},
$$
which completes the proof.
\end{proof}
\begin{Remark}
The unisolvency condition in Lemma \ref{lem-lambdamin} is a mild condition, because in most cases if $\Omega$ satisfies a cone condition
then $\Omega\cap B(x,\delta)$, $x\in\Omega$, also satisfies another cone condition and for sufficiently small $h_{X\cap B,\Omega\cap B}$ Theorem \ref{thm-TTTT} ensures the unisolvency.
\end{Remark}
\begin{Remark}
The role of ``shifted" and ``scaled" basis functions \eqref{basis-shift} is crucial to bound $\lambda_{\mathrm{min}}$ away from zero and
independent of $h_{X,\Omega}$. Otherwise, experiments show that $\lambda_{\mathrm{min}}$ tends to zero when $h_{X,\Omega}\to0$.
See section 6 of \cite{mirzaei-et-al:2012-1} for numerical results.
\end{Remark}
\begin{Lemma}\label{lem-boundDAinv}
With the notation and assumptions of Lemmas \ref{lem-lambdamin} and \ref{lem-boundDw}, we have
\begin{equation}\label{boundDAint}
|D^\alpha A^{-1}(\wh x)|\leqslant C_\alpha h^{-|\alpha|}, \quad \forall \, \alpha \mbox{ with }|\alpha|\leqslant m,
\end{equation}
where $C_\alpha$ is a constant matrix independent of $h=h_{X,\Omega}$.
\end{Lemma}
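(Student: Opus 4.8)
The plan is to differentiate the identity $A(x)A^{-1}(x)=I$ and argue by induction on $|\alpha|$. First note that, since the even extension of $\phi$ is $C^m$, each weight $w_j$ is $C^m$, so by \eqref{A-shift} the matrix-valued function $x\mapsto A(x)$ is $C^m$ on $\Omega$; because $A(x)$ is symmetric positive definite (hence invertible) for every $x$, Cramer's rule shows $x\mapsto A^{-1}(x)$ is $C^m$ as well, so all derivatives $D^\alpha A^{-1}$ with $|\alpha|\leqslant m$ are well defined. Throughout, $|\cdot|$ denotes the spectral norm (any submultiplicative matrix norm works, changing the constants by a factor depending only on $Q$). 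The base case $\alpha=0$ is exactly Lemma \ref{lem-lambdamin}: for symmetric positive definite $A(\wh x)$ one has $|A^{-1}(\wh x)| = 1/\lambda_{\mathrm{min}}(A(\wh x)) \leqslant 1/C_\lambda$, a bound independent of $h$.

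For the inductive step, fix $\alpha$ with $1\leqslant|\alpha|\leqslant m$ and assume $|D^\gamma A^{-1}(\wh x)|\leqslant C_\gamma h^{-|\gamma|}$ for every multi-index $\gamma$ with $|\gamma|<|\alpha|$. Applying $D^\alpha$ to $A A^{-1}=I$ via the (multi-index) Leibniz rule and using $D^\alpha I = 0$, then isolating the $\beta=0$ term, gives
\[
D^\alpha A^{-1} = -A^{-1}\sum_{0<\beta\leqslant\alpha}\binom{\alpha}{\beta}\,(D^\beta A)\,(D^{\alpha-\beta}A^{-1}).
\]
Evaluating at $\wh x$, taking norms, and using submultiplicativity together with Lemma \ref{lem-boundDA} (which gives $|D^\beta A(\wh x)|\leqslant C_\beta h^{-|\beta|}$) and the inductive hypothesis (which gives $|D^{\alpha-\beta}A^{-1}(\wh x)|\leqslant C_{\alpha-\beta}h^{-(|\alpha|-|\beta|)}$, noting $|\alpha-\beta|=|\alpha|-|\beta|$ since $\beta\leqslant\alpha$), we obtain
\[
|D^\alpha A^{-1}(\wh x)| \leqslant \frac{1}{C_\lambda}\sum_{0<\beta\leqslant\alpha}\binom{\alpha}{\beta}C_\beta C_{\alpha-\beta}\,h^{-|\beta|}h^{-(|\alpha|-|\beta|)} = \Big(\frac{1}{C_\lambda}\sum_{0<\beta\leqslant\alpha}\binom{\alpha}{\beta}C_\beta C_{\alpha-\beta}\Big)h^{-|\alpha|}.
\]
The bracketed quantity is a finite sum of constants independent of $h$; naming it $C_\alpha$ completes the induction and the proof.

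The key observation making this work is that every term in the Leibniz expansion contributes exactly the power $h^{-|\beta|}h^{-(|\alpha|-|\beta|)}=h^{-|\alpha|}$, so the homogeneity of the final bound is automatic. There is no serious obstacle here; all the substance sits in Lemma \ref{lem-lambdamin} (the $h$-independent lower bound on $\lambda_{\mathrm{min}}$, which crucially relies on the shifted–scaled basis \eqref{basis-shift}) and Lemma \ref{lem-boundDA}. The only points requiring mild care are justifying that $A^{-1}$ is $C^m$ so the differentiation is legitimate, choosing a submultiplicative norm so the product estimates are termwise, and the bookkeeping of the binomial constants, none of which depend on $h$.
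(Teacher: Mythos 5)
Your proof is correct and follows essentially the same route as the paper: differentiate the identity relating $A$ and $A^{-1}$ via the Leibniz rule, isolate $D^\alpha A^{-1}$, control $|A^{-1}|$ through the $h$-independent lower bound $\lambda_{\mathrm{min}}(A)\geqslant C_\lambda$ from Lemma \ref{lem-lambdamin} together with Lemma \ref{lem-boundDA}, and close the argument by induction. The only differences are cosmetic (you expand $AA^{-1}=I$ rather than $A^{-1}A=I$, organize the induction on $|\alpha|$ with base case $\alpha=0$ instead of starting from $\beta=e_j$, and work with the spectral norm rather than the entrywise absolute value), and your explicit justification that $A^{-1}$ is $C^m$ is a welcome addition.
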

\begin{proof}
Taking the derivative of both sides of the well known relation $A^{-1}(x)A(x)=I$ and evaluating at $\wh x$, we obtain
\begin{equation}\label{DAinv}
D^\alpha A^{-1}(\wh x)=-A^{-1}(\wh x)\sum_{\substack{\beta\leqslant \alpha \\ \beta\neq\alpha}}{\alpha \choose \beta}D^{\alpha-\beta}A(\wh x)D^{\beta}A^{-1}(\wh x).
\end{equation}
Induction on $|\beta|$ can be used to prove the desired result. The first step in induction is considered by choosing $\beta=e_j$, the unit vector with $1$ in $j$-th place. We simply have $D^{e_j}A^{-1}=-A^{-1}(D^{e_j}A)A^{-1}$. Since $|e_j|=1$, equation \eqref{boundDA} yields
$$
|D^{e_j}A^{-1}|\leqslant |A^{-1}|\,C_1h^{-1}|A^{-1}|,
$$
where $C_1$ is a constant matrix. From matrix computations, there exists a constant matrix $C\in \R^{Q\times Q}$ such that $|A^{-1}|\leqslant C\|A^{-1}\|_2$ holds. Since $A$ is symmetric positive definite, we have
\begin{align*}
|D^{e_j}A^{-1}|\leqslant C h^{-1}\|A^{-1}\|_2^2
=\frac{C h^{-1}}{\lambda_{\mathrm{min}}^2(A)}
\leqslant \frac{C}{C_\lambda^2} h^{-1},
\end{align*}
which yields the starting point for induction. Now we suppose that $|D^\beta A^{-1}|\leqslant Ch^{-|\beta|}$ holds for all $\beta\leqslant\alpha$ and $\beta\neq \alpha$. Employing \eqref{boundDA}, equation \eqref{DAinv} gives
\begin{align*}
|D^\alpha A^{-1}|\leqslant |A^{-1}|\sum_{\substack{\beta\leqslant \alpha \\ \beta\neq\alpha}}{\alpha \choose \beta}C_{\alpha,\beta}h^{|\beta|-|\alpha|}C_{\beta}h^{-|\beta|}
\leqslant C_\alpha h^{-|\alpha|},
\end{align*}
which completes the proof.
\end{proof}

\begin{Theorem}\label{thm-lebesgueDa}
The shape functions $a_j$, $j=1,\ldots,N$, from the MLS approximation possess the following stability condition for $|\alpha|\leqslant m$,
\begin{equation*}
\sum_{j=1}^N|D^\alpha a_{j}(x)|\leqslant C_{1,\alpha}h_{X,\Omega}^{-|\alpha|},\quad \forall\, x\in\Omega,
\end{equation*}
where $C_{1,\alpha}$ is independent of data site $X$, provided that all assumptions of Lemmas \ref{lem-lambdamin} and \ref{lem-boundDw} are satisfied.
\end{Theorem}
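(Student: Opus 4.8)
The plan is to fix an arbitrary evaluation point $\wh x\in\Omega$, anchor the shifted and scaled basis \eqref{basis-shift} at $z=\wh x$, and differentiate the closed form of the shape function. By \eqref{ajx-shift} and \eqref{Alambdab-shift},
$$
a_j(x)=w_j(x)\,\p(x_j)^{T}A^{-1}(x)\,\p(x),\qquad \p(x_j)=\Big((x_j-\wh x)^\beta/h^{|\beta|}\Big)_{|\beta|\leqslant m},
$$
where $h=h_{X,\Omega}$, and only the three factors $w_j(x)$, $A^{-1}(x)$ and $\p(x)$ depend on $x$, the vector $\p(x_j)$ being a constant. Since $A$ is positive definite and $C^m$ by Lemma \ref{lem-boundDA}, $A^{-1}$ is $C^m$ as well, so the multinomial Leibniz rule applies and gives, at $x=\wh x$,
$$
D^\alpha a_j(\wh x)=\sum_{\alpha_1+\alpha_2+\alpha_3=\alpha}\frac{\alpha!}{\alpha_1!\,\alpha_2!\,\alpha_3!}\,D^{\alpha_1}w_j(\wh x)\;\p(x_j)^{T}D^{\alpha_2}A^{-1}(\wh x)\,D^{\alpha_3}\p(\wh x).
$$

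Next I would bound the four ingredients separately. Every derivative $D^{\alpha_1}w_j$ has the same compact support as $w_j$, so each summand vanishes unless $\|x_j-\wh x\|_2\leqslant\delta=2C_2h$; on that set Lemma \ref{lem-boundDw} yields $|D^{\alpha_1}w_j(\wh x)|\leqslant C h^{-|\alpha_1|}$, while every entry of $\p(x_j)$ equals $(x_j-\wh x)^\beta/h^{|\beta|}$ with $\|x_j-\wh x\|_2\leqslant 2C_2 h$, hence is bounded by the constant $(2C_2)^{|\beta|}$, giving $|\p(x_j)|\leqslant C$. Lemma \ref{lem-boundDAinv} gives $|D^{\alpha_2}A^{-1}(\wh x)|\leqslant C h^{-|\alpha_2|}$ --- this is precisely the place where the lower eigenvalue estimate of Lemma \ref{lem-lambdamin} and the shifted--scaled basis enter. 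Finally, since $p_\gamma(x)=(x-\wh x)^\gamma/h^{|\gamma|}$ one computes $D^{\alpha_3}p_\gamma(\wh x)=\alpha_3!\,h^{-|\alpha_3|}$ when $\gamma=\alpha_3$ and $0$ otherwise (and $|\alpha_3|\leqslant|\alpha|\leqslant m$, so $p_{\alpha_3}$ is a genuine basis component), so $|D^{\alpha_3}\p(\wh x)|\leqslant C h^{-|\alpha_3|}$.

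Multiplying these estimates, each summand of the Leibniz expansion is at most $C\,h^{-|\alpha_1|-|\alpha_2|-|\alpha_3|}=C\,h^{-|\alpha|}$ times the indicator of $\{\|x_j-\wh x\|_2\leqslant\delta\}$, and the number of triples $(\alpha_1,\alpha_2,\alpha_3)$ with $\alpha_1+\alpha_2+\alpha_3=\alpha$ is a constant depending only on $\alpha$ and $d$; hence $|D^\alpha a_j(\wh x)|\leqslant C h^{-|\alpha|}$ for $j\in J(\wh x)$ and $D^\alpha a_j(\wh x)=0$ otherwise. Summing over $j$ and using the counting bound $\#J(\wh x)\leqslant C_\#=(1+2C_2 c_{\mathrm{qu}})^d$ established above yields $\sum_{j=1}^N|D^\alpha a_j(\wh x)|\leqslant C_\# C\,h^{-|\alpha|}=:C_{1,\alpha}h^{-|\alpha|}$, with $C_{1,\alpha}$ independent of $X$; since $\wh x\in\Omega$ was arbitrary, the proof is complete. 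The main point requiring care is that the basis, and hence the matrix $A(x)$, is re-anchored at each evaluation point ($z=\wh x$), so the derivatives in the Leibniz rule are taken in the variable $x$ with $z$ held fixed --- exactly the situation covered by Lemmas \ref{lem-boundDA} and \ref{lem-boundDAinv} --- and that the $w_j$-derivatives retain the compact support, so that the $C_\#$ counting argument still applies to $D^\alpha a_j$.
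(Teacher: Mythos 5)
Your proof is correct and follows essentially the same route as the paper: you bound $D^{\alpha_1}w_j$ via Lemma \ref{lem-boundDw}, $D^{\alpha_2}A^{-1}$ via Lemma \ref{lem-boundDAinv}, and $D^{\alpha_3}\p(\wh x)=\alpha_3!\,h^{-|\alpha_3|}e_{\alpha_3}$, merely combining the paper's two Leibniz expansions (first for $\lambda=A^{-1}\p$, then for $a_j=w_j\sum_\beta\lambda_\beta\,p_\beta(x_j)$) into a single three-factor one. Your version is in fact slightly more careful on two minor points the paper glosses over: the entries of $\p(x_j)$ are bounded by $(2C_2)^{|\beta|}$ rather than by $1$, and the final summation over $j$ uses the explicit counting bound $\#J(\wh x)\leqslant C_{\#}$.
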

\begin{proof}
Let $h=h_{X,\Omega}$.
First, from \eqref{Alambdab-shift} we have $\lambda(x)=A^{-1}(x)b(x)$ and thus for a fixed but arbitrary $\wh x\in\Omega$
$$
D^\alpha \lambda(\wh x) = \sum_{\eta\leqslant\alpha}{\alpha\choose\eta}D^{\alpha-\eta}A^{-1}(\wh x)\, D^\eta \p(\wh x),\quad \forall\, \alpha \mbox{ with } |\alpha|\leqslant m.
$$
Since $z=\wh x$, obviously all entries of vector $D^\eta \p(\wh x)$ are zero except the $\eta$-entry which is $\eta! h^{-|\eta|}$, i.e.
$D^\eta \p(\wh x)=\eta! h^{-|\eta|}e_\eta$, where $e_\eta$ is a unit vector with $1$ in $\eta$-th place. Now
using \eqref{boundDAint} we can write for a constant matrix $C_{\alpha,\eta}$
\begin{align*}
|D^\alpha \lambda(\wh x)| \leqslant \sum_{\eta\leqslant\alpha}{\alpha\choose\eta}C_{\alpha,\eta}h^{|\eta|-|\alpha|}e_\eta h^{-|\eta|}
\leqslant C_\alpha h^{-|\alpha|},
\end{align*}
where the vector $C_\alpha$ is a bound for $\displaystyle\sum_{\eta\leqslant \alpha}{\alpha\choose\eta}C_{\alpha,\eta}e_\eta$.
Now, taking the derivatives of both sides of equation \eqref{ajx-shift} one obtains
\begin{align*}
D^\alpha a_j( x) = \sum_{\eta\leqslant \alpha}\left\{  {\alpha \choose \eta}D^{\alpha-\eta}w_j( x)
\sum_{|\beta|\leqslant m} D^\eta \lambda_\beta (x)h^{-|\beta|}(x_j-\wh x)^\beta\right\}.
\end{align*}
Evaluating at $\wh x$, applying the bounds of $D^\eta \lambda_\beta(\wh x)$ and $D^{\alpha-\eta}w_j(\wh x)$ and using the fact that $|x_j-\wh x|^\beta\leqslant h^{|\beta|}$, we finally have
\begin{align*}
|D^\alpha a_j(\wh x)| &\leqslant \sum_{\eta\leqslant \alpha}\left\{  {\alpha \choose \eta}C_{\alpha,\eta}h^{|\eta|-|\alpha|}
\sum_{|\beta|\leqslant m} C_\eta h^{-|\eta|}h^{-|\beta|}h^{|\beta|}\right\}\\
&\leqslant C_\alpha h^{-|\alpha|},
\end{align*}
which completes the proof.
\end{proof}
Theorem  \ref{thm-lebesgueDa} establishes the second property of the local polynomial reproduction system $\{D^\alpha a_j\}$.
This will help us to estimate the error function in MLS approximation.
First we note that a region with a Lipschitz boundary automatically satisfies an interior cone condition. More details can be found in \cite{wloka:1987}.
\begin{Theorem}\label{thm-errorLq}
Suppose that $\Omega\subset\R^d$ is a bounded set with a Lipschitz boundary.
Let $m$ be a positive integer, $0\leqslant s < 1$, $p\in[1,\infty)$, $q\in[1,\infty]$ and let $\alpha$ be a multi-index satisfying
$m>|\alpha|+d/p$ for $p>1$ and $m\geqslant |\alpha|+d$ for $p=1$. If $u\in W^{m+s}_p(\Omega)$,
there exist constants $C > 0$ and $h_0>0$ such that
for all $X=\{x_1,\ldots ,x_N\}\subset \Omega$ with $h_{X,\Omega}\leqslant \min\{h_0,1\}$ which are quasi-uniform with the same $c_{\mathrm{qu}}$ in (\ref{quasi-uniform}), the estimate
\begin{align}\label{LPRerrorLp}
\big\|u-s_{u,X} \big\|_{W^{|\alpha|}_q(\Omega)} &\leqslant Ch_{X,\Omega}^{m+s-|\alpha|-d(1/p-1/q)_{+}}\|u\|_{W^{m+s}_p(\Omega)},
\end{align}
holds. Here $(x)_+=\max\{x,0\}$ and $s_{u,X}$ is the MLS approximation of $u$ on data site $X$ in which the corresponding weight function satisfies the assumptions of Lemma
\ref{lem-boundDw}, and the shifted scaled basis polynomials \eqref{basis-shift} are employed.
\end{Theorem}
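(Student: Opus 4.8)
The plan is to reduce the asserted global estimate to a pointwise bound for $D^\beta(u-s_{u,X})$, $|\beta|\le|\alpha|$, obtained from the derivative local polynomial reproduction of Theorem~\ref{thm-lebesgueDa}, and then to integrate that bound over $\Omega$ by a bounded‑overlap argument. Throughout, the hypotheses on $m$ ensure the embedding $W_p^{m+s}(\Omega)\hookrightarrow C(\overline\Omega)$, so the values $u(x_j)$ and hence $s_{u,X}$ are meaningful, and the same embedding is available on every subdomain used below.

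First I would fix an arbitrary $x\in\Omega$. Because $\Omega$ has a Lipschitz boundary it satisfies an interior cone condition, and I would attach to $x$ a local domain $\mathcal{D}_x\subseteq\Omega$ with: (i) $x\in\mathcal{D}_x$ and $x_j\in\mathcal{D}_x$ whenever $\|x-x_j\|_2\le\delta=2C_2h$; (ii) $\mathcal{D}_x$ star‑shaped with respect to a ball of radius comparable to $h=h_{X,\Omega}$; (iii) $\mathrm{diam}(\mathcal{D}_x)$ comparable to $h$ and $\mathcal{D}_x\subseteq B(x,c_\ast h)$ for a fixed $c_\ast$; (iv) chunkiness parameter bounded by some $\gamma_0$ depending only on $\Omega$. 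Far from $\partial\Omega$ one may take $\mathcal{D}_x=\overline{B(x,\delta)}\cap\Omega$; near $\partial\Omega$ the cone condition is used to keep such a $\mathcal{D}_x$ inside $\Omega$. I expect this uniform construction near the boundary to be the main obstacle: it is exactly what permits the true domain $\Omega$ — rather than an enlargement $\Omega^*$ — to appear on the right of \eqref{LPRerrorLp} and keeps the constants independent of $X$. Let $Q_mu\in\mathbb{P}_m^d$ be the averaged Taylor polynomial of order $m$ of $u$ over that ball.

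Next, for $|\beta|\le|\alpha|$ (so $m>|\beta|+d/p$ for $p>1$ and $m\ge|\beta|+d$ for $p=1$), I would use property~1 of $\{D^\beta a_j\}$ (reproduction of $D^\beta p$) and property~3 ($D^\beta a_j(x)=0$ for $j\notin J(x)$) to write
\begin{align*}
D^\beta(u-s_{u,X})(x)=D^\beta(u-Q_mu)(x)-\sum_{j\in J(x)}D^\beta a_j(x)\,(u-Q_mu)(x_j),
\end{align*}
and then estimate the pieces by: Corollary~\ref{cor-taylor-LinfDa}, giving $|D^\beta(u-Q_mu)(x)|\le C\,d_{\mathcal{D}_x}^{\,m+s-|\beta|-d/p}|u|_{W_p^{m+s}(\mathcal{D}_x)}$; Lemma~\ref{lem-tylor-errLinf}, giving $|(u-Q_mu)(x_j)|\le C\,d_{\mathcal{D}_x}^{\,m+s-d/p}|u|_{W_p^{m+s}(\mathcal{D}_x)}$; and Theorem~\ref{thm-lebesgueDa}, giving $\sum_{j}|D^\beta a_j(x)|\le C_{1,\beta}h^{-|\beta|}$ (its hypotheses, including the unisolvency demanded by Lemma~\ref{lem-lambdamin}, being secured by the Lipschitz boundary and quasi‑uniformity once $h$ is small). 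With $d_{\mathcal{D}_x}$ comparable to $h$ and the chunkiness bounded by $\gamma_0$, this yields
\begin{align*}
|D^\beta(u-s_{u,X})(x)|\le C\,h^{\,m+s-|\beta|-d/p}\,|u|_{W_p^{m+s}(\mathcal{D}_x)},\qquad x\in\Omega,\quad|\beta|\le|\alpha|,
\end{align*}
with $C$ independent of $X$.

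Finally I would globalize. Put $M:=|u|_{W_p^{m+s}(\Omega)}$; since $\mathcal{D}_x\subseteq\Omega$ we have $|u|_{W_p^{m+s}(\mathcal{D}_x)}\le M$, so the case $q=\infty$ follows at once by taking suprema (recall $h\le1$, that the exponent is smallest at $|\beta|=|\alpha|$, and that $d/p=d(1/p-1/q)_+$ there). For $q<\infty$ the crux is to control $\int_\Omega|u|_{W_p^{m+s}(\mathcal{D}_x)}^q\,dx$: expanding the seminorm, integrating in $x$ first by Tonelli, and using $\mathcal{D}_x\subseteq B(x,c_\ast h)$ — whence $\mathrm{vol}\{x:\,y,z\in\mathcal{D}_x\}\le\mathrm{vol}\,B(y,c_\ast h)\le Ch^d$ — one gets $\int_\Omega|u|_{W_p^{m+s}(\mathcal{D}_x)}^p\,dx\le Ch^dM^p$ (and the analogue, with a single integral, when $s=0$). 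Combined with $|u|_{W_p^{m+s}(\mathcal{D}_x)}\le M$ this gives $\int_\Omega|u|_{W_p^{m+s}(\mathcal{D}_x)}^q\,dx\le Ch^dM^q$ if $q\ge p$ (write $|u|^q=|u|^{q-p}|u|^p$) and $\le Ch^{dq/p}M^q$ if $q<p$ (Hölder on the bounded set $\Omega$). Inserting these and the pointwise bound,
\begin{align*}
\int_\Omega|D^\beta(u-s_{u,X})|^q\le C\,h^{\,q(m+s-|\beta|-d/p)+d\min\{1,\,q/p\}}M^q=C\,h^{\,q\bigl(m+s-|\beta|-d(1/p-1/q)_+\bigr)}M^q,
\end{align*}
and since $h\le1$ and the exponent $m+s-|\beta|-d(1/p-1/q)_+$ is positive and smallest at $|\beta|=|\alpha|$, summing over $\beta$ and taking $q$‑th roots yields \eqref{LPRerrorLp} with $M\le\|u\|_{W_p^{m+s}(\Omega)}$. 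A shortcut that bypasses the boundary construction of $\mathcal{D}_x$ altogether is to first extend $u$ to $W_p^{m+s}(\R^d)$ with norm control and run the identical argument on the full balls $B(x,\delta)$.
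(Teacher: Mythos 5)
Your primary route has a real gap at the step you yourself flag as the main obstacle: for a general Lipschitz domain there is \emph{no} construction of a star-shaped $\mathcal{D}_x\subseteq\Omega$ with diameter $\lesssim h$, bounded chunkiness, and containing $x$ together with every data point $x_j$ satisfying $\|x-x_j\|_2\leqslant\delta$. The shape functions do not respect the intrinsic geometry of $\Omega$: $a_j(x)\neq 0$ whenever $\|x-x_j\|_2\leqslant\delta$, even if the geodesic distance from $x$ to $x_j$ inside $\Omega$ is large. Picture two parallel ``arms'' of a U-shaped Lipschitz domain separated by a gap smaller than $\delta$: data points on the far arm enter $s_{u,X}(x)$, yet any set star-shaped with respect to a ball and containing both $x$ and such an $x_j$ contains a short path between them, which cannot lie in $\Omega$. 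The cone condition gives you a cone (hence a ball) at $x$ inside $\Omega$, but says nothing about reaching those far-arm centers. Moreover your motivation for insisting on $\mathcal{D}_x\subseteq\Omega$ --- getting $\Omega$ rather than an enlargement $\Omega^*$ on the right of \eqref{LPRerrorLp} --- is unnecessary: the continuous extension $E_\Omega$ already converts $|v|_{W_p^{m+s}(\Omega^*)}$ into $C\|u\|_{W_p^{m+s}(\Omega)}$ via \eqref{eq-normequiv}.

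The ``shortcut'' you mention in your last sentence is not a shortcut but the actual proof, and it is exactly what the paper does: extend $u$ to $v\in W_p^{m+s}(\R^d)$ first (Stein / DeVore--Sharpley), then work on full balls ($\D_k=B(x_k,2\delta)$ in the paper), which are trivially star-shaped with chunkiness $2$, and never worry about staying inside $\Omega$. Granting that, the rest of your argument is sound and in fact packages the globalization more cleanly than the paper: you derive a pointwise bound $|D^\beta(u-s_{u,X})(x)|\leqslant Ch^{m+s-|\beta|-d/p}|v|_{W_p^{m+s}(B(x,c_*h))}$ and integrate in $x$ by Tonelli, using $\mathrm{vol}\{x:y,z\in B(x,c_*h)\}\leqslant Ch^d$, whereas the paper covers $\Omega$ by the $N$ sets $\B_k=B(x_k,\delta)\cap\Omega$, bounds each $\|u-s_{u,X}\|_{W_q^{|\alpha|}(\B_k)}$, and sums, paying for the $p$-versus-$q$ mismatch with the finite-dimensional norm inequality $\|\cdot\|_{\ell^q}\leqslant N^{(1/q-1/p)_+}\|\cdot\|_{\ell^p}$ together with $N\leqslant Ch^{-d}$ and a bounded-overlap count $\sum_k\chi_{\D_k}\leqslant C$. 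Your exponent bookkeeping $d\min\{1,q/p\}$ reproduces $-d(1/p-1/q)_+$ correctly in both regimes, and the $q=\infty$ case is immediate from the pointwise bound. So: replace the first half of your write-up by the extension step, and you have a correct proof that differs from the paper's only in summing a continuum of local estimates instead of finitely many.
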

\begin{proof}
Since $\Omega$ is bounded and has a Lipschitz boundary, we can use the continuous extension operator
$$E_\Omega:W_p^{m+s}(\Omega)\to W_p^{m+s}(\R^d),\quad 1\leqslant p< \infty,$$
to extend any $u\in W^{m+s}_p(\Omega)$ to a function
$v:=E_\Omega u\in W^{m+s}_p(\R^d)$, with $v|_\Omega= u$. Since the extension is continuous, we have
\begin{equation}\label{eq-normequiv}
\|v\|_{W_p^{m+s}(\R^d)}\leqslant C \|u\|_{W_p^{m+s}(\Omega)}.
\end{equation}
The case $s=0$ was constructed by Stein \cite{stein:1971} and works also for $p=\infty$. DeVore and Sharpley \cite{devore-sharpley:1993-1}
have proved this extension for the fractional order spaces.

First we prove \eqref{LPRerrorLp} for $q\in[1,\infty)$. The case $q=\infty$ will be discussed later.
Let the Lipschitz domain $\Omega$ satisfies a cone condition with angle $\theta$ and radius $r$. Assuming $h_0=r/C_2$ in \eqref{eqC2h0},
we first bound the error over subdomains $\mathcal B_k=B(x_k,\delta)\cap\Omega$, $k=1,\ldots,N$, for $\delta=2C_2h_{X,\Omega}$ where $h_{X,\Omega}\leqslant \min\{h_0,1\}$. At the end, we will extend the error bound over entire $\Omega$.
Let $\D_k=B(x_k,2\delta)$, $k=1,\ldots,N$. Clearly, $\D_k\nsubseteq\Omega$ in general. But $\D_k$ is star-shaped with respect to a ball $\wt B\subset \D_k$ with chunkiness parameter $\gamma=2$. Now let $p=Q_{m}v\in\mathbb P_m^d$ be the Taylor polynomial of degree $m$ of $v$ on $\D_k$ averaged over $\wt B$.
The reader should care about the letter $p$, which has been employed for both polynomial and Sobolev notations.
Using the properties of the stable local polynomial reproduction $\{a_j\}$, we can write for $x\in \mathcal B_k$
$$
u(x) -s_{u,X}(x)=u(x) - p(x) + \sum_{j=1}^N a_j(x)\big(p(x_j)-u(x_j)\big),
$$
and in $W_q^{|\alpha|}$ norm,
\begin{equation}\label{equsWB_k}
\|u - s_{u,X} \|_{W_q^{|\alpha|}(\B_k)} \leqslant \|u -  p \|_{W_q^{|\alpha|}(\B_k)}
+\Big\| \sum_{j=1}^N a_{j}(\cdot)\big(p(x_j)-u(x_j)\big)  \Big\|_{W_q^{|\alpha|}(\B_k)}.
\end{equation}
Using the facts that $x_j\in\D_k$ and
$v|_\Omega=u$, the second norm on the right-hand side can be bounded as below
\begin{align*}
\Big\| \sum_{j=1}^N a_{j}(\cdot)\big(p(x_j)-u(x_j)\big)  \Big\|_{W_q^{|\alpha|}(\B_k)}^q
&=\sum_{|\beta|\leqslant|\alpha|}\Big\| \sum_{j=1}^N D^\beta a_{j}(\cdot)\big(p(x_j)-u(x_j)\big)  \Big\|_{L^q(\B_k)}^q\\
&\leqslant  Cd_{\B_k}^{d}\|v-p\|_{L^\infty(\D_k)}^q\sum_{|\beta|\leqslant|\alpha|}\Big(\max_{x\in\B_k}\sum_{j=1}^N \big|D^\beta a_{j}(x)\big|\Big)^{q}\\
&\leqslant Cd_{\B_k}^{d}\|v-p\|_{L^\infty(\D_k)}^q\sum_{|\beta|\leqslant|\alpha|}C_{1,\beta}h_{X,\Omega}^{-|\beta|q}\\
&\leqslant C_\alpha d_{\B_k}^{d} h_{X,\Omega}^{-|\alpha|q} \|v-p\|_{L^\infty(\mathcal D_k)}^q,
\end{align*}
where we use
\begin{align*}
\Big\| \sum_{j=1}^N D^\beta a_{j}(\cdot)\big(p(x_j)-u(x_j)\big)  \Big\|_{L^q(\B_k)}^q &\leqslant \int_{\B_k}\Big(\sum_{j=1}^N \big|D^\beta a_{j}(x)\big|\big|p(x_j)-u(x_j)\big|\Big)^q dx\\
&\leqslant \|v-p\|_{L^\infty(\mathcal D_k)}^q\Big(\max_{x\in\B_k}\sum_{j=1}^N \big|D^\beta a_{j}(x)\big|\Big)^q \int_{\B_k} dx,
\end{align*}
which together with $\int_{\B_k} dx=\mathrm{vol}(\B_k)\leqslant c\,d_{\B_k}^d$ gives the inequality in the second line. The inequality in the third line
follows from Theorem \ref{thm-lebesgueDa}. The last estimate satisfies because $h_{X,\Omega}\leqslant 1$ and $|\alpha|\geqslant |\beta|$.
Thus from \eqref{equsWB_k} we can write
\begin{equation*} 
\|u - s_{u,X} \|_{W_q^{|\alpha|}(\B_k)}
\leqslant \|u-p\|_{W_q^{|\alpha|}(\B_k)} + C_{\alpha} d_{\B_k}^{d/q} h_{X,\Omega}^{-|\alpha|} \|v-p\|_{L^\infty(\mathcal D_k)}.
\end{equation*}
To bound the both terms on the right-hand side of inequality above, first by \eqref{tayloravgLinf-frac} we have
\begin{align*}
\|v - p\|_{L^\infty(\D_k)} \leqslant c\, d_{\D_k}^{m+s-d/p}|v|_{W_p^{m+s}(\D_k)}.
\end{align*}
Then, since $\B_k\subset \D_k$,
\eqref{tayloravgLq-frac} leads to
\begin{align*}
\|u - p \|_{W_q^{|\alpha|}(\B_k)} &\leqslant \|v - p \|_{W_q^{|\alpha|}(\D_k)}\\
&\leqslant  C\, d_{\D_k}^{m+s-|\alpha|+d(1/q-1/p)}|v|_{W^{m+s}_p(\D_k)}.
\end{align*}
If we assemble everything up to this point and use the facts that $d_{\B_k} \leqslant 2\delta$ and $d_{\D_k}= 4\delta$ we get
\begin{equation}\label{eqDaDsB_k}
\|u - s_{u,X} \|_{W_q^{|\alpha|}(\B_k)} \leqslant C\,h_{X,\Omega}^{m+s-|\alpha|+d(1/q-1/p)}|v|_{W_p^{m+s}(\D_k)}.
\end{equation}
Now we should extend this bound over entire $\Omega$. Since $\delta=2C_2h_{X,\Omega}$ and
$C_{2}\geqslant 1/2$, for every $x\in\Omega$ there is a center $x_j\in B(x,\delta)\cap\Omega$. This clearly shows
$\Omega=\cup_{k=1}^N \B_k \subset \cup_{k=1}^N \D_k=:\Omega^*$.
First, since $\D_k\subset\Omega^*$ we have
\begin{align*}
\sum_{k=1}^N|v|_{W_p^{m+s}(\D_k)}^p&=\sum_{k=1}^N \sum_{|\beta|=m}\int_{\D_k}\int_{\D_k}\frac{|D^\beta u(x)-D^\beta u(y)|^p}{|x-y|^{d+ps}}dxdy\\
&\leqslant\sum_{k=1}^N \sum_{|\beta|=m}\int_{\D_k}\int_{\Omega^*}\frac{|D^\beta u(x)-D^\beta u(y)|^p}{|x-y|^{d+ps}}dxdy\\
&=\sum_{|\beta|=m}\int_{\Omega^*}\left(\sum_{k=1}^N \chi_{\D_k}(x)\right)\int_{\Omega^*}\frac{|D^\beta u(x)-D^\beta u(y)|^p}{|x-y|^{d+ps}}dxdy,
\end{align*}
where $\chi_\D$ denotes the characteristic function of the set $\D$. Note that $n(x):=\sum_{k=1}^N \chi_{\D_k}(x)$ is the number of subdomains $\D_k$
containing $x$. This function can be bounded by a constant because $X$ is a quasi-uniform set. In fact $n(x)$ is the number of points $x_k$ located in the ball $B(x,2\delta)$. Since this ball is contained in a cube of side-length $4\delta/\sqrt d$, we can write
$$
n(x)\leqslant \left(\frac{4\delta}{\sqrt d q_X}\right)^d\leqslant \left(\frac{8C_2h_{X,\Omega}c_{\mathrm{qu}}}{\sqrt d h_{X,\Omega}}\right)^d
=\left(\frac{8c_{\mathrm{qu}}C_2}{\sqrt d}\right)^d.
$$
Thus we have
\begin{align*}
\sum_{k=1}^N|v|_{W_p^{m+s}(\D_k)}^p&\leqslant C
\sum_{|\beta|=m}\int_{\Omega^*}\int_{\Omega^*}\frac{|D^\beta u(x)-D^\beta u(y)|^p}{|x-y|^{d+ps}}dxdy\\
&=C |v|_{W_p^{m+s}(\Omega^*)}^p.
\end{align*}
Now applying \eqref{eqDaDsB_k} and the above bound we can write
\begin{align*}
\|u - s_{u,X} \|_{W_q^{|\alpha|}(\Omega)} &\leqslant \left(\sum_{k=1}^N\|u - s_{u,X} \|_{W_q^{|\alpha|}(\B_k)}^q \right)^{1/q}\\
&\leqslant C\,h_{X,\Omega}^{m+s-|\alpha|+d(1/q-1/p)}\left(\sum_{k=1}^N|v|_{W_p^{m+s}(\D_k)}^q\right)^{1/q}\\
&\leqslant C\,h_{X,\Omega}^{m+s-|\alpha|+d(1/q-1/p)}N^{(1/q-1/p)_{+}}\left(\sum_{k=1}^N|v|_{W_p^{m+s}(\D_k)}^p\right)^{1/p}\\
&\leqslant C\,h_{X,\Omega}^{m+s-|\alpha|+d(1/q-1/p)}h_{X,\Omega}^{-d(1/q-1/p)_{+}}|v|_{W_p^{m+s}(\Omega^*)}\\
&\leqslant C\,h_{X,\Omega}^{m+s-|\alpha|-d(1/p-1/q)_{+}}|v|_{W_p^{m+s}(\Omega^*)}\\
&\leqslant C\,h_{X,\Omega}^{m+s-|\alpha|-d(1/p-1/q)_{+}}|v|_{W_p^{m+s}(\R^d)}\\
&\leqslant C\,h_{X,\Omega}^{m+s-|\alpha|-d(1/p-1/q)_{+}}\|v\|_{W_p^{m+s}(\R^d)}.
\end{align*}
The bound on the third line above follows from standard inequalities relating $p$ and $q$ norms on finite dimensional spaces where
$(x)_{+}=\max\{x,0\}$.
In the fourth line, to bound $N$ by the fill distance,
let $d_\Omega$ be the diameter of $\Omega$. Since $\Omega$ is bounded, there exists a cube of side length $d_{\Omega}/\sqrt d$ that contains $\Omega$.
Thus
$$
N\leqslant \Big(\frac{d_\Omega}{\sqrt d q_X}\Big)^d \leqslant \Big(\frac{c_{\mathrm{qu}}d_\Omega}{\sqrt d h_{X,\Omega}}\Big)^d = ch_{X,\Omega}^{-d}.
$$
In the fifth line, we have used the identity $d\big(1/q-1/p\big)-d\big(1/q-1/p\big)_{+}=-d\big(1/p-1/q\big)_{+}$.
Finally, we invoke the norm equivalence property \eqref{eq-normequiv} to get the final bound
\begin{equation*}
\|u - s_{u,X} \|_{W_q^{|\alpha|}(\Omega)} \leqslant C\,h_{X,\Omega}^{m+s-|\alpha|-d(1/p-1/q)_{+}}\|u\|_{W_p^{m+s}(\Omega)}.
\end{equation*}
The case $q=\infty$ can be proved in a similar way, because
\eqref{tayloravgLq-frac} can be used for $q=\infty$ to bound the first term in \eqref{equsWB_k}, and the second term can be simply bounded by
$$
\Big\| \sum_{j=1}^N a_{j}(\cdot)\big(p(x_j)-u(x_j)\big)  \Big\|_{W_\infty^{|\alpha|}(\B_k)}\leqslant
Ch_{X,\Omega}^{-|\alpha|} \|v-p\|_{L^\infty(\mathcal D_k)}.
$$
The reader can continue the proof to get
\begin{equation*}
\|u - s_{u,X} \|_{W_\infty^{|\alpha|}(\Omega)} \leqslant C\,h_{X,\Omega}^{m+s-|\alpha|-d/p}\|u\|_{W_p^{m+s}(\Omega)}.
\end{equation*}
\end{proof}
\begin{Remark}
According to Remark \ref{remark-s1}, one can easily proceed with the proof of Theorem \ref{thm-errorLq} (by doing some modifications) to get the estimation \begin{equation}\label{errorLp_s1}
\big\|u-s_{u,X} \big\|_{W^{|\alpha|}_q(\Omega)} \leqslant Ch_{X,\Omega}^{m+1-|\alpha|-d(1/p-1/q)_{+}}\|u\|_{W^{m+1}_p(\Omega)},
\end{equation}
provided that $m+1>|\alpha|+d/p$ for $p>1$ and $m+1\geqslant |\alpha|+d$ for $p=1$.
\end{Remark}
\section{Application to Galerkin method for PDEs}
As an application,  we consider the second order elliptic partial differential equation
\begin{align}
-\sum_{i,j=1}^d \frac{\partial}{\partial x_i} \left(\kappa_{ij}\frac{\partial u}{\partial x_j}\right)(x)+c(x)u(x)&\,=\, f(x), \quad x\in \Omega, \label{pde1}\\
\sum_{i,j=1}^d \kappa_{ij}(x)\frac{\partial u}{\partial x_j}(x)n_i(x) + b(x)u(x) & \,=\, g(x),\quad x\in\partial\Omega,\label{pde-bound}
\end{align}
where $\Omega$ is a bounded domain with Lipschitz boundary $\partial \Omega$, and $\kappa_{ij}, c\in L^\infty(\Omega)$, $f\in L^2(\Omega)$, $a_{ij},b\in L^\infty(\partial\Omega)$, $g\in L^2(\partial\Omega)$ and $n$ is the unit normal vector to the boundary $\partial \Omega$. Matrix
$K(x)=\big(\kappa_{ij}(x)\big)$ is assumed to be uniformly elliptic in $\Omega$, i.e. there exists a constant $\gamma$ such that for all $x\in\Omega$
and all $\alpha\in\R^d$ we have $\alpha^TK(x)\alpha\geqslant \gamma\|\alpha\|_2^2$. Moreover, we assume $c\geqslant 0$ and $b\geqslant 0$, and at least
one of them is uniformly bounded away from zero on a subset of nonzero measure on $\Omega$ or $\partial \Omega$, respectively. Under these assumptions
the weak form of equation \eqref{pde1} together with boundary condition \eqref{pde-bound} is $a(u,v)=\ell(v)$ where
$a(u,v):W_2^1(\Omega)\times W_2^1(\Omega)\to \R$
is a coercive and continuous bilinear form defined by
\begin{equation*}
a(u,v)=\int_{\Omega}\left(  \sum_{i,j=1}^d \kappa_{ij}\frac{\partial u}{\partial x_j}\frac{\partial v}{\partial x_i} + cuv \right)d\Omega
+\int_{\partial\Omega}buv\, d\Gamma,
\end{equation*}
and $\ell: W_2^1(\Omega)\to \R$ is a continuous linear functional defined by
\begin{equation*}
\ell(v)=\int_{\Omega}fv\,d\Omega
+\int_{\partial\Omega}gv\, d\Gamma.
\end{equation*}
Using the Lax-Milgram theory, the corresponding variational problem
\begin{equation}\label{varprob}
\mbox{find } u\in W_2^1(\Omega)\mbox{ such that } a(u,v)=\ell(v), \mbox{ for all }v\in W_2^1(\Omega),
\end{equation}
admits a unique solution $u$ and the solution is continuously depended on data $\ell$. This problem has been analyzed in \cite{wendland:1999-1} using radial basis functions interpolation.

To find the numerical solution we use the same Galerkin method as in the
classical finite element method. The approximation solution is sought in a subspace generated by MLS shape functions.
We define for quasi-uniform set $X=\{x_1,\ldots,x_N\}\subset\Omega$
$$
V_N=\mathrm{span}\{a_1,a_2,\ldots,a_N\}
$$
as a subspace of $W_2^1(\Omega)$ and solve the discretized problem
\begin{equation}\label{varprobdisc}
\mbox{find } u_N\in V_N\mbox{ such that } a(u_N,v)=\ell(v), \mbox{ for all }v\in V_N.
\end{equation}
Of course this step concerns the computation of domain and boundary integrals, which is the most difficult stage of the procedure.
But we assume that all integrals are computed accurately and seek a bound for the error $\|u-u_N\|_{W_2^1(\Omega)}$ for the function $u\in W_2^{m+s}(\Omega)$ where $m>1+d/2$ and $0\leqslant s <1$. Our analysis allows to consider functions that are less smooth than the functions in $W_2^{m+1}(\Omega)$.
First, recalling the Cea's Lemma we have
\begin{equation*}
\|u-u_N\|_{W_2^1(\Omega)}\leqslant C \inf_{v\in V_N}\|u-v\|_{W_2^1(\Omega)},
\end{equation*}
where $C$ is a generic constant. Since $s_{u,X}\in V_N$, we obtain
\begin{equation*}
\|u-u_N\|_{W_2^1(\Omega)}\leqslant C \,\|u-s_{u,X}\|_{W_2^1(\Omega)},
\end{equation*}
which leads to the following corollary.
\begin{Corollary}\label{cor-galekin-err}
Let $\Omega\subset\R^d$ be a bounded domain with Lipschitz boundary, and
$m$ be an integer satisfying $m>1+d/2$ and let $s\in[0,1)$.
Suppose that $u\in W_2^{m+s}(\Omega)$ is the solution to the variational problem \eqref{varprob} and $u_N\in V_N$ is the solution of
discretized problem \eqref{varprobdisc} where $V_N$ is constructed by the quasi-uniform set $X=\{x_1,\ldots,x_N\}\subset\Omega$, the weight function $\phi$ satisfying assumptions of Lemma \ref{lem-boundDw}, and the basis functions \eqref{basis-shift}. Then there exist
constants $C$ and $h_0$ such that for all set $X$ with $h_{X,\Omega}\leqslant \min\{h_0,1\}$
the estimation
\begin{equation*}
\|u-u_N\|_{W_2^1(\Omega)}\leqslant C h_{X,\Omega}^{m+s-1}\|u\|_{W_2^{m+s}(\Omega)}
\end{equation*}
holds.
\end{Corollary}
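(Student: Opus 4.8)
The plan is to read the statement as a routine application of Galerkin quasi-optimality together with the concrete MLS bound of Theorem \ref{thm-errorLq}, exactly along the lines already sketched in the text preceding the corollary. First I would record that, under the stated hypotheses on $\kappa_{ij}$, $c$ and $b$ (uniform ellipticity of $K$, nonnegativity of $c$ and $b$, with at least one of them bounded away from zero on a set of positive measure), the bilinear form $a(\cdot,\cdot)$ is continuous and coercive on $W_2^1(\Omega)\times W_2^1(\Omega)$, so that both \eqref{varprob} and \eqref{varprobdisc} are well posed by Lax--Milgram. Galerkin orthogonality $a(u-u_N,v)=0$ for all $v\in V_N$, combined with coercivity and continuity, then yields C\'ea's estimate
\[
\|u-u_N\|_{W_2^1(\Omega)}\leqslant \frac{C_{\mathrm{cont}}}{C_{\mathrm{coer}}}\inf_{v\in V_N}\|u-v\|_{W_2^1(\Omega)},
\]
with an $h$-independent constant depending only on the data.

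Next I would choose a concrete competitor in $V_N$. Since the MLS approximant $s_{u,X}=\sum_{j=1}^N a_j\, u(x_j)$ is a linear combination of the generators $a_1,\dots,a_N$ of $V_N$, it lies in $V_N$, so the infimum above is dominated by $\|u-s_{u,X}\|_{W_2^1(\Omega)}$, giving
\[
\|u-u_N\|_{W_2^1(\Omega)}\leqslant C\,\|u-s_{u,X}\|_{W_2^1(\Omega)}.
\]
Finally I would invoke Theorem \ref{thm-errorLq} with the parameters $p=q=2$ and $|\alpha|=1$, and with the given $m$ and $s$. Here one must check that the hypotheses of that theorem are in force: $\Omega$ is bounded with Lipschitz boundary, the weight function $\phi$ satisfies the assumptions of Lemma \ref{lem-boundDw}, the shifted scaled basis \eqref{basis-shift} is used, $X$ is quasi-uniform with $h_{X,\Omega}\leqslant\min\{h_0,1\}$, and the index condition $m>|\alpha|+d/p$ holds for $p=2>1$; the latter reads $m>1+d/2$, which is exactly the hypothesis of the corollary. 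Since $p=q$ we have $(1/p-1/q)_+=0$, so Theorem \ref{thm-errorLq} delivers
\[
\|u-s_{u,X}\|_{W_2^1(\Omega)}\leqslant C\,h_{X,\Omega}^{m+s-1}\|u\|_{W_2^{m+s}(\Omega)},
\]
and chaining the three displayed inequalities proves the claim, with $h_0$ inherited from Theorem \ref{thm-errorLq}.

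I do not expect a genuine obstacle here, since the corollary is essentially a packaging of earlier results. The only two points that require a little care are: verifying that the stated assumptions on the coefficients really do yield coercivity and continuity of $a(\cdot,\cdot)$ on $W_2^1(\Omega)$, so that C\'ea's lemma applies with a constant independent of $X$ (this uses a Poincar\'e/trace argument of standard type); and matching the admissibility condition of Theorem \ref{thm-errorLq} to the hypothesis $m>1+d/2$, which is immediate once one sets $p=2$, $|\alpha|=1$. Both are routine, so the proof is short.
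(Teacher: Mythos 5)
Your proposal matches the paper's own argument: the text preceding the corollary invokes C\'ea's lemma, uses $s_{u,X}\in V_N$ as the competitor, and then applies Theorem \ref{thm-errorLq} with $p=q=2$ and $|\alpha|=1$ (so that $(1/p-1/q)_{+}=0$ and the index condition $m>1+d/2$ is exactly what is assumed). Your additional remarks on verifying coercivity and continuity of $a(\cdot,\cdot)$ are correct but already assumed in the paper's setup, so the proof is essentially identical.
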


Finally according to \eqref{errorLp_s1} and discussions before Corollary \ref{cor-galekin-err}, if integer $m$ satisfies $m>d/2$ and $u\in W_2^{m+1}(\Omega)$ then the error bound
\begin{equation*}
\|u-u_N\|_{W_2^1(\Omega)}\leqslant C h_{X,\Omega}^{m}\|u\|_{W_2^{m+1}(\Omega)}
\end{equation*}
holds.

The orders are the same as those for classical finite elements. In both cases we can use the technique of Nitsche to estimate the error in $L^2$-norm.
\section{Numerical examples}
Since there are extensive numerical examples in literature, here we will restrict ourselves to a couple of examples, in which we will concentrate on the predicted orders
of the errors in \eqref{LPRerrorLp} and \eqref{errorLp_s1}.

We consider the following example
$$
u(x) = \|x\|_2^\lambda, \quad x\in \Omega\subset\R^d,
$$
where $\lambda$ is a real parameter and $\Omega$ is a bounded region around the origin. It is well known that
$$
u\in W^\tau_p(\Omega) \Longleftrightarrow \lambda > \tau-d/p.
$$
We let $p=2$ and $\Omega=[-0.5,0.5]^2\subset\R^2$, and we assign two values $1.5$ and $3$ to $\lambda$.
According to the theory, in the first case we set $m=2$ and examine \eqref{LPRerrorLp},
and in the second case we set $m=3$ and examine \eqref{errorLp_s1}.
In both cases a regular mesh distribution with the fill distance $h$ is used as a set of centers, the compactly supported $C^4$ Wendland's function $\phi(r) = (1-r)_{+}^6(35r^2+18r+3)$ is employed as a weight function, and $\delta=2mh$ is used as a support-size. Results are presented in Tables \ref{tb1} and \ref{tb2} for $q=2,\infty$, and different order derivatives $\alpha$. The $L_2$-errors are computed using a $(200\times 200)$-point Gauss-Legendre quadrature, and $L_\infty$-errors are computed on a very fine regular mesh of size $h_s=0.005$.

\begin{table}
\centering
\caption{Orders for $\lambda=1.5$ and $m=2$}\label{tb1}
\begin{tabular}{|lllllllll|}
  \hline
  && $L_2$ &&  && $L_\infty$ && \\
  \cline{3-5} \cline{7-9}
  \fs$h$      && \fs{$\alpha=(0,0)$}&&\fs{$\alpha=(1,0)$}  && \fs{$\alpha=(0,0)$} && \fs{$\alpha=(1,0)$} \\
  \hline
  \fs$0.1$    && \fs$-$    && \fs$-$   && \fs$-$     &&  \fs$-$ \\
  \fs$0.05$   && \fs$2.56$ && \fs$1.51$&& \fs$1.50$  &&  \fs$0.50$  \\
  \fs$0.025$  && \fs$2.52$ && \fs$1.50$&& \fs$1.50$  &&  \fs$0.50$  \\
  \fs$0.0125$ && \fs$2.56$ && \fs$1.49$&& \fs$1.50$  &&  \fs$0.62$  \\
 \hline
 \fs{Theory} && \fs$2.5$     && \fs$1.5$   && \fs$1.5$  &&  \fs$0.5$\\
 \hline
\end{tabular}
\end{table}

\begin{table}
\centering
\caption{Orders for $\lambda=3$ and $m=3$}\label{tb2}
\begin{tabular}{|lllllllllllll|}
  \hline
              && $L_2$     &&           &&           && $L_\infty$&&            &&\\
  \cline{3-7} \cline{9-13}
  \fs$h$      &&\fs{$\alpha=(0,0)$}&&\fs{$\alpha=(1,0)$}&&\fs{$\alpha=(2,0)$}&&\fs{$\alpha=(0,0)$}&& \fs{$\alpha=(1,0)$}&&\fs{$\alpha=(2,0)$} \\
  \hline
  \fs$0.1$    && \fs$-$    && \fs$-$    && \fs$-$    &&  \fs$-$   && \fs$-$    &&\fs$-$ \\
  \fs$0.05$   && \fs$3.75$ && \fs$3.08$ && \fs$2.06$ &&  \fs$3.00$&& \fs$2.03$ &&\fs$1.00$ \\
  \fs$0.025$  && \fs$3.86$ && \fs$3.01$ && \fs$1.99$ &&  \fs$3.00$&& \fs$2.00$ &&\fs$1.00$ \\
  \fs$0.0125$ && \fs$3.88$ && \fs$3.00$ && \fs$1.93$ &&  \fs$3.00$&& \fs$2.06$ &&\fs$1.00$ \\
 \hline
 \fs{Theory}  && \fs$4$    && \fs$3$    && \fs$2$    &&  \fs$3$   &&  \fs$2$   &&\fs$1$ \\
 \hline
\end{tabular}
\end{table}

As we can see, the experimental results confirm the theoretical bounds.

\section{Appendix}
\begin{appendix}
We restate the following definition, lemmas and theorem from Chapter 3 of the book \cite{wendland:2005-1}.
\begin{Definition}\label{appendix-normingsets}
Let $V$ be a finite dimensional vector space with norm $\|\cdot\|_V$ and let $Z\in V^*$ (the dual space of $V$) be a finite set
consisting of $N$ functionals. We will say that $Z$ is a norming set for $V$ if the mapping $T:V\to T(V)\subseteq\R^N$ defined by
$T(v)=(z(v))_{z\in Z}$ is injective. $T$ is called the sampling operator.
\end{Definition}
If $Z$ is a norming set for $V$, then $T^{-1}:T(V)\to V$ exists and we can simply show that
$$
\|Tv\|_{\R^N}\leqslant \|T\|\, \|v\|_V, \quad \|v\|_V\leqslant \|T^{-1}\|\,\|Tv\|_{\R^N},
$$
which means that $\|\cdot\|_V$ and $\|T(\cdot)\|_{\R^N}$ are equivalent norms on $V$.

\begin{Lemma}\label{lem-cone-ball}
Suppose that $C=C(x,\xi,\theta,r)$ is a cone. Then for every $0<h\leqslant r/(1+\sin\theta)$ the closed ball $B=B(y,h\sin\theta)$ with center
$y=x+h\xi$ and radius $h\sin\theta$ is contained in $C(x,\xi,\theta,r)$.
\end{Lemma}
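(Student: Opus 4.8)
The plan is to take an arbitrary point $z\in B=B(x+h\xi,h\sin\theta)$ and write it explicitly in the parametric form occurring in the definition of $C(x,\xi,\theta,r)$. Write $z=x+v$ with $v=h\xi+w$ and $\|w\|\leqslant h\sin\theta$, and set $t:=\|v\|$ and (once $v\neq0$ is known) $\eta:=v/\|v\|$. It then suffices to check the three requirements $\|\eta\|=1$, $t\in[0,r]$ and $\eta^T\xi\geqslant\cos\theta$.

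First I would handle the radial constraint. The triangle inequality gives $\|v\|\leqslant h+\|w\|\leqslant h(1+\sin\theta)\leqslant r$ by the hypothesis on $h$, so $t\in[0,r]$; and $\|v\|\geqslant h-\|w\|\geqslant h(1-\sin\theta)>0$ since $\theta\in(0,\pi/2)$ forces $\sin\theta<1$, so $v\neq0$ and $\eta$ is a genuine unit vector.

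The substantive step is the angular estimate. Decompose $w=a\xi+w_\perp$ with $a:=w^T\xi$ and $w_\perp\perp\xi$, so that $|a|\leqslant\|w\|\leqslant h\sin\theta$ (hence $h+a>0$) and $\|w_\perp\|^2=\|w\|^2-a^2\leqslant h^2\sin^2\theta-a^2$. Then $v=(h+a)\xi+w_\perp$, $\|v\|^2=(h+a)^2+\|w_\perp\|^2$, and $\eta^T\xi=(h+a)/\|v\|$. Since $h+a>0$ and $\cos\theta>0$, the inequality $\eta^T\xi\geqslant\cos\theta$ is equivalent, after squaring, to $(h+a)^2\sin^2\theta\geqslant\cos^2\theta\,\|w_\perp\|^2$. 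Using $\|w_\perp\|^2\leqslant h^2\sin^2\theta-a^2$ it is enough to prove $(h+a)^2\sin^2\theta\geqslant\cos^2\theta\,(h^2\sin^2\theta-a^2)$, and expanding and collecting terms (with $\cos^2\theta=1-\sin^2\theta$) this reduces to the manifestly true identity $(a+h\sin^2\theta)^2\geqslant0$. That completes the verification.

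I do not expect a genuine obstacle: the lemma is elementary. The only places needing care are checking that $v\neq0$ and $h+a>0$ so that the normalization and the squaring step are legitimate, and carrying out the final algebraic reduction cleanly. Geometrically the constants are sharp for a transparent reason: the apex $x$ lies at distance $h$ from the ball's center while the ball has radius $h\sin\theta$, so the ball is tangent from the inside to the infinite cone of half-angle $\theta$ about $\xi$, and its point farthest from $x$ sits at distance $h(1+\sin\theta)\leqslant r$; equality in $(a+h\sin^2\theta)^2\geqslant0$ marks exactly the points where the ball touches the lateral boundary of the cone.
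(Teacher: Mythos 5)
Your proof is correct and complete. Note that the paper itself supplies no proof of this lemma: it is restated from Chapter 3 of Wendland's book \cite{wendland:2005-1}, so there is no in-paper argument to compare against, and a self-contained verification like yours is exactly what is called for. Each of the three defining conditions of the cone $C(x,\xi,\theta,r)$ is checked soundly: the upper bound $\|v\|\leqslant h+\|w\|\leqslant h(1+\sin\theta)\leqslant r$ handles the radial constraint, the lower bound $\|v\|\geqslant h(1-\sin\theta)>0$ legitimizes the normalization $\eta=v/\|v\|$, and the angular condition reduces, via the orthogonal decomposition $w=a\xi+w_\perp$ (with $|a|\leqslant h\sin\theta$ guaranteeing $h+a>0$ so the squaring is reversible) and the substitution $\cos^2\theta=1-\sin^2\theta$, to the nonnegativity of $(a+h\sin^2\theta)^2$; I verified the algebra and it is exact. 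The only implicit inputs --- that $\xi$ is a unit vector and $\theta\in(0,\pi/2)$, so that $\sin\theta<1$ and $\cos\theta>0$ --- are part of the paper's definition of the interior cone condition, so nothing is missing. The closing remark on tangency correctly explains why the constant $r/(1+\sin\theta)$ is sharp.
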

\begin{Theorem}\label{thm-TTTT}
Suppose that $\Omega\subset\R^d$ is compact and satisfies an interior cone condition with radius $r>0$ and angle $\theta\in(0,\pi/2)$. Let $m\in\N$ be fixed. Suppose $h>0$ and the set $X=\{x_1,x_2,\ldots,x_N\}\subseteq\Omega$ satisfy
\begin{itemize}
\item[(1)] $h\leqslant \frac{r\sin\theta}{4(1+\sin\theta)^2m^2}$,
\item[(2)] for every $B(x,h)\subseteq\Omega$ there is a center $x_j\in X\cap B(x,h)$;
\end{itemize}
then $Z=\{\delta_{x_1},\ldots,\delta_{x_N}\}$ is a normig set for $\mathbb P_m^d\big|_\Omega$ and the inverse of associated sampling operator is bounded by $2$. In fact for every $p\in\mathbb P_m^d$ there exists $x_k\in \Omega\cap X$ such that $|p(x_k)|\geqslant \frac{1}{2}\|p\|_{\infty,\Omega}$.
If $h=h_{X,\Omega}$, the second item is automatically satisfied.

Note that, the functionals $Z=\{\delta_{x_1},\ldots,\delta_{x_N}\}$ form a normig set for $\mathbb P_m^d$
if and only if $X$ is $\mathbb P_m^d$-unisolvent.
\end{Theorem}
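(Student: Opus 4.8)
The plan is to reduce the entire statement to the single quantitative inequality
\[
\text{for every } p\in\mathbb P_m^d:\qquad \max_{1\leqslant k\leqslant N}|p(x_k)|\ \geqslant\ \tfrac12\,\|p\|_{L^\infty(\Omega)}.
\]
This forces the sampling operator $T(p)=\bigl(p(x_j)\bigr)_{j}$ to be injective (if $Tp=0$ then $\|p\|_{L^\infty(\Omega)}=0$, hence $p\equiv0$), so $Z=\{\delta_{x_1},\dots,\delta_{x_N}\}$ is a norming set for $\mathbb P_m^d\big|_\Omega$; equipping $\R^N$ with the $\ell^\infty$-norm, the inequality is verbatim the assertion $\|T^{-1}\|\leqslant2$, which is also the ``in fact'' clause of the theorem. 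The concluding equivalence is then automatic, since injectivity of $T$ means exactly that no nonzero polynomial of $\mathbb P_m^d$ vanishes on all of $X$, i.e.\ that $X$ is $\mathbb P_m^d$-unisolvent; and if $h=h_{X,\Omega}$ then by the definition of the fill distance every $x\in\Omega$ has a point of $X$ within distance $h$, so hypothesis~(2) holds with no further work.

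For the inequality itself I would argue geometrically. Normalise $\|p\|_{L^\infty(\Omega)}=1$, and by compactness of $\Omega$ and continuity of $p$ pick $\bar x\in\Omega$ with $|p(\bar x)|=1$; the interior cone condition supplies a cone $C=C(\bar x,\xi,\theta,r)\subseteq\Omega$. Fix a scale $\tau$ with
\[
(1+\sin\theta)\,h/\sin\theta\ \leqslant\ \tau\ \leqslant\ r/(c\,m^2)
\]
for a fixed, not-too-large constant $c$; the whole point of hypothesis~(1) is that it guarantees this interval is nonempty. Put $w=\bar x+\tau\xi$. Since $\tau\leqslant r/(1+\sin\theta)$, the cone-ball Lemma~\ref{lem-cone-ball} (used at scale $\tau$) gives $B(w,\tau\sin\theta)\subseteq C$, and because $\tau\sin\theta\geqslant h$ we get $B(w,h)\subseteq C\subseteq\Omega$; hypothesis~(2) then yields a center $x_k\in X\cap B(w,h)$. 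Write $x_k-\bar x=\tau\xi+e$ with $\|e\|\leqslant h$: the lower bound on $\tau$ makes the angle between $x_k-\bar x$ and $\xi$ at most $\theta$, so with $v:=(x_k-\bar x)/\|x_k-\bar x\|$ the whole segment $\{\bar x+sv:\,s\in[0,r]\}$ lies in $C\subseteq\Omega$, and $x_k$ sits on it at parameter $s_k=\|x_k-\bar x\|\leqslant\tau+h$. Now apply the classical univariate Markov inequality to $g(s):=p(\bar x+sv)$ on $[0,r]$, where $|g|\leqslant1$: this gives $|g'|\leqslant 2m^2/r$, hence
\[
|p(x_k)|=|g(s_k)|\ \geqslant\ |g(0)|-s_k\cdot\frac{2m^2}{r}\ \geqslant\ 1-\frac{2m^2(\tau+h)}{r}\ \geqslant\ \tfrac12 ,
\]
the last step holding once $\tau$ (and $h\leqslant\tau$) are of order $r/m^2$ with the correct constant, i.e.\ once $c$ above is chosen large enough. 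This produces the desired center.

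What is left is routine bookkeeping: the two elementary geometric facts used above (that $B(w,h)$ fits inside the cone, and that the ray from $\bar x$ through $x_k$ stays in the cone), and the constant-tracking that arranges the admissible window for $\tau$ to be nonempty precisely under hypothesis~(1); replacing the crude bound $|g'|\leqslant2m^2/r$ by the exact Chebyshev behaviour near an endpoint of the interval is what lets one reach the stated constant $r\sin\theta/\bigl(4(1+\sin\theta)^2m^2\bigr)$. The one real subtlety, and the step I expect to be the main obstacle, is to avoid paying a factor $m^2$ twice. A naive proof would first drag $\bar x$ to $w$ along $\xi$ and then separately correct from $w$ to the center $x_k$, applying one Markov estimate on each leg; that only yields $h\lesssim r\sin\theta/m^4$. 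The fix is to place $x_k$ so that it lies on a \emph{single} line through $\bar x$ that remains inside the cone over its full length $r$, so that one Markov inequality along that line does the whole job and the $m^2$-scaling in hypothesis~(1) is exactly what is needed.
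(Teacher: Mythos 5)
The paper does not actually prove this theorem --- it is restated verbatim from Chapter 3 of \cite{wendland:2005-1} --- and your argument is essentially the standard proof from that reference: normalize $p$, use the interior cone condition together with Lemma \ref{lem-cone-ball} to place a ball of radius $h$ inside $\Omega$ near a maximizer $\bar x$, extract a center $x_k$ via hypothesis (2), and run a single univariate Markov inequality along the ray from $\bar x$ through $x_k$, which stays in the cone because the displacement angle is at most $\theta$; this is exactly the ``pay $m^2$ only once'' device you identify. Your constants also check out: with $\tau=h(1+\sin\theta)/\sin\theta$ one gets $2m^2(\tau+h)/r\leqslant(1+2\sin\theta)/\bigl(2(1+\sin\theta)^2\bigr)\leqslant 1/2$ under hypothesis (1), so the crude Markov bound $|g'|\leqslant 2m^2/r$ already suffices and no Chebyshev refinement near the endpoint is needed.
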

The following Bernstein inequality can be easily proved by using the one dimensional Bernstein inequality
$$
\|p\|_{\infty,(-\rho,\rho)}\leqslant \rho^m \|p\|_{\infty,(-1,1)},\quad \forall\, p\in \mathbb P_m^1.
$$
Details of the proof can be found in \cite[Lemma B.4]{melenk:2004-1}.
\begin{Lemma}\label{lem-bernstein}
Assume that $B_1$ and $B_2$ are two balls of radius $\rho_1$ and $\rho_2$, respectively, and $B_1\subset B_2\subset \R^d$. Then
$$
\|p\|_{\infty,B_2}\leqslant \left(\frac{2\rho_2}{\rho_1} \right)^m \|p\|_{\infty,B_1}, \quad \forall\, p\in \mathbb P_m^d.
$$
\end{Lemma}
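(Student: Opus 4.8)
The plan is to reduce this multivariate estimate to the stated one–dimensional Bernstein inequality by restricting $p$ to a suitably chosen line. First I would dispose of the trivial case: if the centre $c_1$ of $B_1$ already realises $\|p\|_{\infty,B_2}$ there is nothing to prove, since $c_1\in B_1\subseteq B_2$. Otherwise, by continuity of $p$ and compactness of $\overline{B_2}$, choose $x^\ast\in\overline{B_2}$ with $|p(x^\ast)|=\|p\|_{\infty,B_2}$, set $\xi=(x^\ast-c_1)/\|x^\ast-c_1\|_2$, and define the univariate polynomial $g(t):=p(c_1+t\xi)$, which has degree at most $m$.

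The two geometric facts I would then record are: (i) $c_1+t\xi\in B_1$ whenever $|t|\leqslant\rho_1$, so the set $\{c_1+t\xi:|t|\leqslant\rho_1\}$ is a diameter of $B_1$ and hence $\|g\|_{\infty,[-\rho_1,\rho_1]}\leqslant\|p\|_{\infty,B_1}$; and (ii) writing $x^\ast=c_1+t^\ast\xi$ with $t^\ast=\|x^\ast-c_1\|_2\geqslant 0$, both $x^\ast$ and $c_1$ lie in $B_2$, whose diameter is $2\rho_2$, so $t^\ast\in[-2\rho_2,2\rho_2]$. Note also that $B_1\subseteq B_2$ forces $\rho_1\leqslant\rho_2$, so $2\rho_2/\rho_1\geqslant 1$ and the one–dimensional inequality is applicable with parameter $2\rho_2/\rho_1$.

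Next I would rescale the given Bernstein inequality: substituting $q(s):=g(\rho_1 s)$ into $\|q\|_{\infty,(-2\rho_2/\rho_1,\,2\rho_2/\rho_1)}\leqslant(2\rho_2/\rho_1)^m\|q\|_{\infty,(-1,1)}$ yields $\|g\|_{\infty,[-2\rho_2,2\rho_2]}\leqslant(2\rho_2/\rho_1)^m\|g\|_{\infty,[-\rho_1,\rho_1]}$ (passing to closed intervals by continuity). Combining this with (i) and (ii),
\[
\|p\|_{\infty,B_2}=|g(t^\ast)|\leqslant\|g\|_{\infty,[-2\rho_2,2\rho_2]}\leqslant\Big(\tfrac{2\rho_2}{\rho_1}\Big)^m\|g\|_{\infty,[-\rho_1,\rho_1]}\leqslant\Big(\tfrac{2\rho_2}{\rho_1}\Big)^m\|p\|_{\infty,B_1},
\]
which is exactly the claimed inequality.

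There is no real obstacle here; the only point demanding care is the choice of line, and it is what fixes the constant. The line must pass through the \emph{centre} of $B_1$, so that its intersection with $B_1$ has full length $2\rho_1$ and step (i) is valid, while the extremal point $x^\ast$ sits at parameter distance at most $2\rho_2$ — the full diameter of $B_2$ — from that same centre. The ratio of these half–lengths, $2\rho_2/\rho_1$, is precisely the base of the exponent; taking instead a line through the centre of $B_2$, or a chord of $B_1$ missing its centre, would either break the bound $\|g\|_{\infty,[-\rho_1,\rho_1]}\leqslant\|p\|_{\infty,B_1}$ or worsen the constant.
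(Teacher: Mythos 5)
Your overall strategy---restrict $p$ to the line through the centre $c_1$ of $B_1$ and a maximiser $x^\ast\in\overline{B_2}$, observe that the corresponding segment of length $2\rho_1$ lies in $B_1$ while $x^\ast$ sits at parameter distance $t^\ast\leqslant 2\rho_2$ from $c_1$, and then invoke a one--dimensional growth estimate---is exactly the route the paper intends; the paper itself gives no proof beyond quoting the univariate inequality and pointing to Melenk's Lemma B.4, and your geometric steps (i) and (ii) are correct. The genuine gap is in the analytic ingredient you feed into this reduction. The univariate inequality $\|p\|_{\infty,(-\rho,\rho)}\leqslant \rho^m\|p\|_{\infty,(-1,1)}$, although it is the form quoted in the paper, is false for $m\geqslant 2$: by the Chebyshev extremal property the sharp constant is $T_m(\rho)$, and already $T_2(\rho)=2\rho^2-1>\rho^2$ for $\rho>1$. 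The elementary bound one can actually prove is $T_m(\rho)\leqslant\big(\rho+\sqrt{\rho^2-1}\,\big)^m\leqslant(2\rho)^m$. Substituting this corrected inequality into your argument with $\rho=2\rho_2/\rho_1$ yields the constant $(4\rho_2/\rho_1)^m$, not $(2\rho_2/\rho_1)^m$.

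This is not something a cleverer choice of line could repair, because the lemma is false with the stated constant. Take $d=1$, $B_1=[8,10]$ (centre $9$, radius $\rho_1=1$), $B_2=[-10,10]$ ($\rho_2=10$), and $p(x)=2(x-9)^2-1\in\mathbb P_2^1$: then $\|p\|_{\infty,B_1}=1$ while $p(-10)=721>400=(2\rho_2/\rho_1)^2$; reading $p$ as a function of $x_1$ gives the same counterexample for every $d$. So the closing claim of your last paragraph, that this choice of line produces ``precisely'' the base $2\rho_2/\rho_1$, should be withdrawn. What the argument honestly delivers is $\|p\|_{\infty,B_2}\leqslant(4\rho_2/\rho_1)^m\|p\|_{\infty,B_1}$, or the sharp $T_m\big((2\rho_2-\rho_1)/\rho_1\big)$ if one keeps the Chebyshev value together with the better bound $t^\ast\leqslant 2\rho_2-\rho_1$ (my example attains this). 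The damage to the paper is only cosmetic: the lemma is used once, in the proof of Lemma \ref{lem-lambdamin}, where any constant of the form $(C\rho_2/\rho_1)^m$ suffices and replacing $C_1=(2/\rho)^m$ by $(4/\rho)^m$ merely changes the value of $C_\lambda$.
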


\end{appendix}

\section*{Acknowledgment}
Special thanks go to Dr. Keivan Mohajer for careful proofreading, and to the referees
for their comments and suggestions.


\end{document}